\documentclass[11pt,letterpaper]{amsart}

\usepackage{amssymb,mathtools,amsthm}
\usepackage[a4paper,left=3cm,right=3cm,top=3cm,bottom=3cm]{geometry}
\usepackage[T1]{fontenc}
\usepackage{lmodern}
\usepackage[utf8]{inputenc}
\usepackage{microtype}
\usepackage{enumitem}
\usepackage{caption}
\usepackage{dsfont}
\usepackage{subcaption}
%for subfigures

\usepackage{bbm, dsfont}
%for the indicator \1 command

\setenumerate[1]{label={\arabic*.},ref={\arabic*}}

\mathtoolsset{showonlyrefs}

% LABELLING OF RESULTS
\newcounter{n}
\newcounter{bigresults}
\numberwithin{n}{section}
\theoremstyle{plain}

\newtheorem{lemma}[n]{Lemma}
\newtheorem{theorem}[bigresults]{Theorem}
\newtheorem*{theorem*}{Theorem}

\newtheorem{proposition}[n]{Proposition}
\newtheorem{corollary}[n]{Corollary}
\theoremstyle{definition}
\newtheorem{definition}[n]{Definition}
\newtheorem*{definition*}{Definition}

\newtheorem*{remark*}{Remark}

\newtheorem*{claim*}{Claim}
\newtheorem*{assertion*}{Assertion}
\newtheorem*{proposition*}{Proposition}

\theoremstyle{remark}

\usepackage{tikz}
\usetikzlibrary{arrows.meta}
\usepackage{subcaption}

\usepackage{hyperref}
\definecolor{colorlinks}{RGB}{0, 24, 168}
\definecolor{colorcites}{RGB}{124, 10, 2}
\hypersetup{
	colorlinks=true,
	linkcolor=colorlinks,
	citecolor=colorcites,
	urlcolor=colorlinks,
	pdfborder={0 0 0}
}

\renewcommand\varphi\phi
\renewcommand\epsilon\varepsilon
\newcommand\eps\varepsilon
\renewcommand\setminus\smallsetminus

\renewcommand\O[1]{\(\mathrm{O}(#1)\)}

\newcommand\T{\mathbb T}

\newcommand\E{\mathbb E}

\newcommand\bbS{\mathbb S}
\newcommand\C{\mathbb C}
\renewcommand\P{\mathbb P}
\newcommand\Z{\mathbb Z}
\renewcommand\H{\mathbb H}

\renewcommand\L{\mathbb L}
\newcommand\N{\mathbb N}

\newcommand\calC{\mathcal C}

\newcommand\calF{\mathcal F}
\newcommand\calG{\mathcal G}

\newcommand\calS{\mathcal S}

\newcommand\muLoop{{\sf Loop}}

\newcommand\hexlattice{{\mathbb H}}
\newcommand\spaceLoop[1]{\mathfrak{S}_{\muLoop}(#1)}

\newcommand\face{F}
\newcommand\faces[1]{\face(#1)}
\newcommand\domain\Omega
\newcommand\vertices[1]{{V(#1)}}
\newcommand\edges[1]{{E(#1)}}
\renewcommand\varnothing\emptyset

\newcommand{\xlra}{\xleftrightarrow} %for path connections
 %for path anti-connections

\newcommand\upvert{\mathrm{Y}} %set of vertices corresponding to upward-oriented triangles
	%set of edges in (upward-oriented) triangles around vertices in a given set

\newcommand{\1}{\mathbbm{1}}

\newcommand\ballloop{B}

\renewcommand\subset\subseteq

\author{Alexander Glazman}
\address{Universität Innsbruck, Innsbruck, Austria}
\email{alexander.glazman@uibk.ac.at}

\author{Matan Harel}
\address{Northeastern University, Boston, USA}
\email{m.harel@northeastern.edu}

\author{Nathan Zelesko}
\address{Northeastern University, Boston, USA}
\email{zelesko.n@northeastern.edu}

\title[Planar percolation and the loop \O{n} model]{Planar percolation and the loop \O{n} model}

\date{\today}

\keywords{Percolation threshold, amenable unimodular planar graph, loop \O{n} model, phase transition, graphical representation}
\makeatletter
\@namedef{subjclassname@2020}{\textup{2020} Mathematics Subject Classification}
\makeatother
\subjclass[2020]{Primary 60K35, 82B20; secondary 05C10}
\begin{document}

\begin{abstract}
	We show that a large class of site percolation processes on any planar graph contains either zero or infinitely many infinite connected components. The assumptions that we require are: tail triviality, positive association (FKG), and that the set of open vertices is stochastically dominated by the set of closed ones.
	This covers the case of Bernoulli site percolation at parameter~$p\leq 1/2$ and resolves Conjecture~$8$ from the work of Benjamini and Schramm from~1996.
	Our result also implies that~$p_c\geq 1/2$ for any invariantly amenable unimodular random rooted planar graph.
	
	Furthermore, we apply our statement to the loop~\O{n} model on the hexagonal lattice and confirm a part of the phase diagram conjectured by Nienhuis in 1982: the existence of infinitely many loops around every face whenever~$n\in [1,2]$ and~$x\in [1/\sqrt{2},1]$.
	The point~$n=2,x=1/\sqrt{2}$ is conjectured to be critical.
	This is the first instance that this behavior has been proven in such a large region of parameters.
	In a big portion of this region, the loop~\O{n} model has no known FKG representation. We apply our percolation result to quenched distributions that can be described as divide and color models.
\end{abstract}

\maketitle

\section{Introduction}
\label{sec:intro}

Modern statistical physics has its origin in the study of the Lenz-Ising model and its phase transition. This model of ferromagnetic interactions was first defined more than a century ago. It was later generalized to a large class of spin lattice models meant as a discrete approximation to Euclidean geometry. A few decades later, Broadbent-Hammersley~\cite{BroHam57} introduced Bernoulli percolation. 
This is a model of random subgraphs of a given (crystallographic) lattice undergoing a phase transition in terms of the existence of an infinite connected component.
It was then discovered that percolation models provide a graphical representation for correlations in Ising-like spin models.
Since then, percolation models have been at the center of the mathematical approach to phase transitions; see~\cite{Gri99a} for a classical manuscript and~\cite{Man25} for a recent survey.

Special attention has been lavished upon two-dimensional lattice models, where the scaling limits of lattice approximations are expected to be described by a conformal field theory.
These emergent symmetries have lead to the wide-open conjecture that certain interfaces converge to the Schramm--Loenwer Evolution (SLE). 
This has been proved in only a handful of models, including critical Ising model on isoradial graphs~\cite{Smi10,CheSmi11} and critical site percolation on the triangular lattice~\cite{Smi01, Smi01a}.

Much of the study of percolation is focused on vertex (quasi-)transitive graphs, where ergodic-theoretic arguments can be used to restrict the possible behavior of Bernoulli percolation.
Indeed, denote by~$N_\infty$ the number of infinite connected components.
Then, it is straightforward to show that, on any connected transitive graph, $N_\infty$ is almost surely constant, and can only be zero, one, or infinity. If the graph is amenable, a powerful argument of Burton and Keane~\cite{BurKea89} rules out the case~$N_\infty=\infty$.
Also, if $N_\infty=1$ for Bernoulli percolation of some parameter $p$, then the same holds for any~$q>p$~\cite{HagPer99}.
Both statements are false if one considers general planar graphs; see Fig.~\ref{fig:countre-examples} for examples where $N_\infty$ has a non-trivial distribution and the property $N_\infty=1$ is non-monotone in~$p$.

\begin{figure}[t]
\centering \begin{subfigure}[]{0.4 \textwidth}
\begin{center}\includegraphics[scale=0.8,page=1]{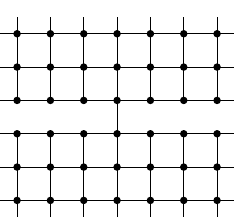} \end{center}
\label{fig:subfig1a} 
\end{subfigure}  \begin{subfigure}[]{0.4 \textwidth}
\begin{center} \includegraphics[scale=0.8,page = 2]{Nccexamples.pdf} \end{center}
\label{fig:subfig1b} \end{subfigure} \caption{\textsc{Left}: two copies of $\mathbb{Z} \times \mathbb{Z}^+$ connected by one edge; at any $p > p_c(\mathbb{Z}^2)$, the probability that the number of infinite components is one or two is positive. \textsc{Right}: a half plane connected to a graph of branching number $\sqrt{2}$. For $p_c(\mathbb{Z}^2) < p \leq 1/\sqrt{2}$, Bernoulli percolation contains  a unique infinite component, almost surely; for $1/\sqrt{2}< p < 1$, there are infinitely many connected components in Bernoulli percolation.}
\label{fig:countre-examples}
\end{figure}

In this work we study general locally finite planar graphs and rule out the possibility that $1 \leq N_\infty < \infty$ for  Bernoulli site percolation whenever~$p \leq 1/2$. In fact, we prove this result for a more general class of site percolation models (Theorem~\ref{thm:perco}).
We emphasize that Theorem~\ref{thm:perco} does not require any symmetry, and also holds true when planar embeddings of~$G$ have any number of accumulation points. 
The case of Bernoulli percolation at $p=1/2$ settles Conjecture~$8$ in the seminal work of Benjamini and Schramm~\cite{BenSch96}. When~$G$ is quasi-transitive, this conjecture has been established recently by Grimmett and Li~\cite{GriLi25}.

In instances where one can rule out the scenario where $N_{\infty} = \infty$ {\em a priori}, Theorem~\ref{thm:perco} implies the absence of infinite connected components and provides the optimal lower bound on the percolation threshold~$p_c$.  We show that~$p_c\geq 1/2$ for any invariantly amenable unimodular random rooted planar graph (Corollary~\ref{cor:p-c}).
This improves on a uniform lower bound proved by Peled~\cite{Pel20} and can be thought of as a generalization of the no-coexistence theorems of Zhang (unpublished, described in~\cite[Lemma~9.12]{Gri99a}) and Sheffield~\cite{She05} (see also~\cite{DumRaoTas19}).
Such results have been very useful in proving delocalization of height functions~\cite{ChaPelSheTas21,Lam19,GlaMan21} which allowed for new proofs of the continuity of the phase transition~\cite{GlaLam25} and the existence of a Berezinskii–Kosterlitz–Thouless (BKT) phase~\cite{LisEng23,AizHarPelSha21}; see~\cite{FroSpe81,DumSidTas17} for the original proofs. 
Since we do not rely on the symmetries of the graphs, our method opens a path to extend all these results to more general settings.

Corollary~\ref{cor:divide-color} extends the statement~$p_c\geq 1/2$ of Corollary~\ref{cor:p-c} to divide and color models, a general class which includes the Ising and fuzzy Potts models as particular cases. As we show below, the loop \O{n} model also fits in this class.
This model is supported on collections of non-intersecting cycles (loops) on the hexagonal lattice and has parameters~$n>0$ (loop-weight) and~$x>0$ (edge-weight). 
It was introduced in 1981~\cite{DomMukNie81} and is difficult to study due to the absence of monotonicity. This model is conjectured to undergo a phase transition in terms of loop lengths for all~$n\in (0,2]$~\cite{Nie82}: macroscopic loops when~$x\geq x_c(n)$ versus exponential decay when~$x< x_c(n)$.

We use Theorem~\ref{thm:perco} via Corollary~\ref{cor:divide-color} to establish a large portion of the conjectured phase diagram of the loop \O{n} model (Theorem~\ref{thm:LoopOn}). 
Specifically, we prove that each face is surrounded by infinitely many loops when~$n \in [1,2], x \in [1/\sqrt{2},1]$.
This covers nearly all previously known results of this sort~\cite{DumGlaPel21,GlaMan21,CraGlaHarPel25,GlaLam25}.
Together with the dichotomy shown in~\cite{DumGlaPel21}, our result implies the macroscopic behavior when~$n \in [1,2], x \in [1/\sqrt{2},1/\sqrt{n}]$.

The main idea is to use Edwards--Sokal-type graphical representations and apply Theorem~\ref{thm:perco} twice to the respective conditional distributions. These quenched measures will be positively associated as independent percolations  -- while the positive association fails for the annealed measures. We hope that our approach of applying positive association for conditional measures will be useful for the study of other models.

\subsection{Site percolation models on planar graphs}

Let~$G=(V,E)$ be a planar graph.
It is called locally finite if each vertex has a finite degree.
We consider a product $\sigma$-algebra~$\calF$ on~$\{0,1\}^V$. 
We say that~$A\in\calF$ is a tail event if its occurrence cannot be altered by changing the state of finitely many vertices.
We introduce a pointwise partial order on~$\{0,1\}^V$: for~$\sigma,\sigma'\in \{0,1\}^V$, we say that~$\sigma\leq\sigma'$ if~$\sigma_v\leq \sigma'_v$ for any~$v\in V$.
An event is called increasing if its indicator function is non-decreasing with respect to this partial order.

A site percolation process is a random variable~$\sigma$ with values in~$\{0,1\}^V$; a vertex~$v\in V$ is called open if~$\sigma_v=1$ and closed otherwise.
The most prominent example is Bernoulli site percolation at some parameter~$p\in [0,1]$, where each vertex is assigned~$1$ with probability~$p$, independently of all others.
We often identify~$\sigma$ with the subgraph of~$G$ induced by the set of open vertices.
We say that~$\sigma$ is
\begin{itemize}
	\item {\em tail trivial} if, for any tail event~$A$,
	\[
		\P(\sigma\in A)\in \{0,1\};
	\]
	\item {\em positively associated} if, for any two increasing events~$A$ and~$B$,
	\[
		\P(\sigma\in A\cap B)\geq \P(\sigma\in A) \cdot \P(\sigma\in B);
	\]
	\item {\em stochastically dominated} by~$1-\sigma$ if, for any increasing event~$A$,
	\[
		\P(\sigma\in A) \leq \P((1-\sigma)\in A).
	\]
\end{itemize}
In the last item, the random variable~$1-\sigma$ is defined by~$(1-\sigma)_v:=1-\sigma_v$ for each~$v\in V$.
By the Kolmogorov zero-one law and the Harris inequality~\cite{Har60}, Bernoulli site percolation satisfies the first two properties; whenever~$p\leq 1/2$, it also satisfies the third.

\begin{theorem}\label{thm:perco}
	Let~$G=(V,E)$ be an infinite locally finite planar graph.
	Consider a random variable~$\sigma\colon V\to \{0,1\}$ that is tail trivial, positively associated and is stochastically dominated by $1-\sigma$.
	Then, $\{\sigma=1\}$ contains either no infinite connected component a.s. or infinitely many of them a.s.
\end{theorem}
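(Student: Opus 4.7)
The plan is to argue by contradiction. Suppose $N_\infty \geq 1$ with positive probability; by tail triviality, $N_\infty$ is almost surely equal to a deterministic constant $k$, and we need only rule out the case $1 \leq k < \infty$. As a first step, the event ``$\sigma$ has an infinite connected component'' is increasing and has probability $1$ by assumption, so the stochastic domination $\sigma \leq_{st} 1-\sigma$ gives that $\{\sigma=0\}$ also contains an infinite connected component almost surely. More quantitatively, the maximum number of vertex-disjoint infinite open paths out of a graph ball $B_R$ around a fixed root $o$ is a monotone increasing functional of $\sigma$, so it is stochastically dominated by the analogous count for $1-\sigma$—namely, the maximum number of vertex-disjoint infinite closed paths leaving $\partial B_R$. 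Since each of the $k$ infinite open clusters contributes such a path once all of them meet $B_R$, for $R$ large we have almost surely at least $k$ disjoint infinite open paths and at least $k$ disjoint infinite closed paths leaving $\partial B_R$.

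These arms are cyclically arranged around $B_R$ in the planar embedding, and by planarity open and closed arms cannot cross. The topological crux is a classical planarity fact: in a planar graph embedded in a closed disk, two vertex-disjoint paths whose four endpoints alternate on the boundary circle cannot coexist. I would therefore engineer, via FKG and the stochastic domination, a positive-probability event on which the cyclic order around $B_R$ contains an alternating quadruple $(a_1^O, a_1^C, a_2^O, a_2^C)$ with $a_1^O, a_2^O$ two arms of a single infinite open cluster and $a_1^C, a_2^C$ two arms of a single infinite closed cluster. On this event, the two arms in each pair must be joined by a path inside $B_R$ (since by assumption they lie in distinct infinite components of their cluster outside $B_R$, yet in the same cluster globally), and these vertex-disjoint paths with alternating boundary endpoints then violate planarity—contradicting $N_\infty = k$.

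The main obstacle is producing this alternating quadruple. The count ``number of vertex-disjoint infinite open paths'' is monotone, but ``some infinite cluster has at least two arms'' is not, so FKG must be applied with care; moreover, even with plentiful open and closed arms, the cyclic order need not a priori be alternating. The technical heart of the proof is thus expected to combine (a) FKG and tail triviality to force the total number of arms of some infinite open cluster to exceed one (trivial when $k=1$, and in the general case requiring a pigeonhole-type amplification), (b) the stochastic domination to produce an abundance of closed arms interspersed with the open ones, and (c) a careful selection of nested monotone events that secures the alternating cyclic order—while remaining jointly compatible with FKG throughout. This is in the spirit of Zhang's classical argument on $\Z^2$, with the one-sided stochastic domination $\sigma \leq_{st} 1-\sigma$ playing the role of the self-duality there, enabling the proof to go through without any symmetry or transitivity of $G$.
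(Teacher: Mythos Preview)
Your high-level plan is in the spirit of Zhang's argument, and you correctly identify the three ingredients (FKG, stochastic domination, planar topology). However, the proof as written has genuine gaps. First, the topological contradiction you describe is garbled: if two infinite arms emanating from $\partial B_R$ lie in the same infinite cluster, their connection need not pass through $B_R$; it can occur arbitrarily far out. The actual obstruction is that an infinite open cluster containing arms on both sides of an infinite closed arm would have to cross it, which is impossible since open and closed vertices are disjoint---this is a separation argument in the complement of $B_R$, not a disk-crossing argument inside it. Second, and more seriously, you never actually produce the alternation. You name it as ``the technical heart'' and gesture at ``nested monotone events'', but give no mechanism; for general $k$ and a general planar graph there is no evident pigeonhole that forces two arms of one open cluster to be cyclically separated by two arms of one closed cluster.

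The paper's proof supplies precisely the two missing pieces. For the arm production, it uses an \emph{iterated square-root trick}: starting from one boundary arc that connects to infinity with probability $>1-\varepsilon$, one uses FKG and the bound $\P(\sigma_v=0)\geq 1/2$ to split it into two arcs each succeeding with probability $>1-\sqrt{2\varepsilon}$, and iterates to obtain $2k$ arcs, each carrying both an open and a closed arm with high probability (Lemma~\ref{lem:crossings}). For the topology, rather than seeking a direct contradiction, the paper proves a counting lemma (Lemma~\ref{lem:catalan}): on the $2k$-arm event, planar separation forces $N_\infty(\sigma)+N_\infty(1-\tau)\geq k+1$, where $\tau$ is obtained from a Strassen coupling so that $\sigma\leq\tau$ and $1-\tau\stackrel{d}{=}\sigma$. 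Since $k$ is arbitrary, $N_\infty(\sigma)=\infty$. Note also the use of Strassen's theorem: because you only have $\sigma\leq_{st}1-\sigma$ rather than equality in law, open and closed arms cannot be compared directly on the same configuration, and the monotone coupling is what makes the planar separation argument go through.
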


The study of percolation on general planar graphs was initiated by Benjamini and Schramm~\cite{BenSch96}.
That celebrated work asks which properties of Bernoulli site percolation are particular to two-dimensional lattices and which ones hold in general.
Their Conjecture~$8$ states the claim of Theorem~\ref{thm:perco} for Bernoulli site percolation at~$p=1/2$.
As mentioned above, this model satisfies the assumptions of Theorem~\ref{thm:perco} and, thus, we settle the conjecture completely.
The statement at~$p=1/2$ for quasi-transitive planar graphs is the content of~\cite[Theorem~1.9]{GriLi25}.

Theorem~\ref{thm:perco} is also linked to~\cite[Conjecture~$7$]{BenSch96} that states that, for any planar graph of minimal degree at least~$7$, $p_c<1/2$ and~$p_u\geq 1-p_c$, where $p_c$ and~$p_u$ are defined by
\begin{align*}
	p_c(G) &:= \inf \{ p : \mathbb{P}[G_p \text{ contains an infinite connected component}] =1\},\\
	p_u(G) &:= \inf \{ p : \mathbb{P}[G_p \text{ contains a unique infinite connected component}] > 0\}.
\end{align*}
In the case where~$G$ can be embedded in the plane with no accumulation points, this conjecture has been confirmed by Haslegrave and Panagiotis~\cite{HasPan21} ($p_c<1/2$) and Li~\cite{Li23} ($p_u\geq 1-p_c$).
Theorem~\ref{thm:perco} implies that $p_u \geq 1/2$ for any infinite locally finite planar graph, and~$p_u=p_c\geq 1/2$ if~$G$ is planar, unimodular, and invariantly amenable (Corollary~\ref{cor:p-c}). It is classical~\cite{BenSch01a} that~$p_u \geq 1 - p_c$ if~$G$ is planar, unimodular, and invariantly nonamenable. Building on an initial preprint of the current work, Li~\cite{Li26} proved that $p_u \geq 1 - p_c$ for any planar graph with countably many accumulation points in its Freudenthal embedding. She also constructed a planar graph where $p_u < 1 - p_c$. This graph must have uncountably many accumulation points in all of its planar embeddings.

Finally, we want to emphasize two robust features of the arguments presented in this work. 
First, our methods are not restricted to Bernoulli site percolation. Instead, we rely on the FKG inequality and tail triviality. This extends the study of planar percolations to this more general class of processes, which may allow for additional applications.
Second, our methods do not use any special properties of a `nice' choice of embedding of the planar graph $G$, such as a circle packing. Indeed, unlike most earlier works on planar percolations, we do not require the existence of a proper planar embedding (i.e. without accumulation points). We hope that our approach can be used to remove this extraneous assumption in other cases as well.

\subsection{Corollaries for percolation on unimodular invariantly amenable graphs}
\label{sec:divide-color}

Classical results of Burton and Keane~\cite{BurKea89} and Zhang (see~\cite[Chapter~11]{Gri99a}) imply that~$p_c=p_u\geq 1/2$ on any (quasi-)transitive amenable graph.
Using Theorem~\ref{thm:perco}, this can be extended to the case of unimodular invariantly amenable graphs (Corollary~\ref{cor:p-c}).
Let~$\calG_\bullet$ (resp.~$\calG_{\bullet\bullet}$) be the set of all isomorphism classes of locally finite planar graphs together with a distinguished vertex (resp. two distinguished vertices) equipped with the local topology.
A rooted random planar graph~$(G,\rho)$ is a random variable with values in~$\calG_{\bullet}$.
We say that~$(G,\rho)$ is {\em unimodular} if, for any Borel function~$f\colon \calG_{\bullet\bullet}\to [0,\infty]$, we have
\begin{equation}\label{eq:MTP}
	\E \left[ \sum_{v\in V} f(G, \rho, v) \right] = \E \left[ \sum_{v\in V} f(G, v, \rho) \right]. \tag{MTP}
\end{equation}
A bond percolation on a graph~$G=(V,E)$ is a random variable~$\omega$ that takes values in~$\{0,1\}^E$.
We identify~$\omega$ with the spanning subgraph of~$G$ given by~$\{\omega = 1\}$; we say that it is {\em finitary} if all connected components are finite, almost surely, and {\em invariant} if $(G,\rho,\omega)$ is unimodular in a sense that~\eqref{eq:MTP} holds also when $f$ depends on both $(G,\rho)$ and~$\omega$ (see~\cite[Section~2]{AngHutNacRay18} for more details).
For~$v\in V$, define~$K_\omega(v)$ as the connected component of~$v$ in~$\omega$;
for~$U\subset V$, define~$\partial U$ as the set of vertices in~$U$ that are adjacent to~$V\setminus U$.
We say that~$(G,\rho)$ is (vertex) {\em invariantly amenable} if
\[
	\inf \left\{ \E\left[\frac{|\partial K_\omega(\rho)|}{|K_\omega(\rho)|}\right] \colon \omega \text{ a finitary invariant percolation on } G \right\} = 0.
\]
Our definition via inner vertex boundaries agrees with~\cite{AldLyo07} and is different from~\cite{AngHutNacRay18}, which uses edge boundaries.
Note that if a graph is edge invariantly amenable in the sense of~\cite{AngHutNacRay18}, then it is also vertex invariantly amenable.
Moreover, when the root has a finite expected degree, being unimodular and (edge) invariantly amenable is equivalent to being the Benjamini--Schramm (or local) limit of finite planar maps~\cite{LipTar80,AngHutNacRay18}.

\begin{corollary}\label{cor:p-c}
	Let~$(G,\rho)$ be a rooted infinite locally finite planar random graph.
	Assume that its distribution is unimodular and invariantly amenable.
	Then,  Bernoulli site percolation on~$(G,\rho)$ at parameter~$p$ contains no infinite connected component if~$p\leq 1/2$, and at most one infinite connected component if~$p> 1/2$, almost surely.
	In particular,
	\[
		p_c(G)=p_u(G)\geq 1/2 \text{ a.s.}
	\]
\end{corollary}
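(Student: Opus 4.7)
The plan is a two-step argument combining Theorem~\ref{thm:perco} with a Burton--Keane theorem adapted to unimodular invariantly amenable random rooted graphs. The first step uses Theorem~\ref{thm:perco} to establish the dichotomy $N_\infty\in\{0,\infty\}$ at $p\leq 1/2$; the second step uses invariant amenability together with the mass transport principle to rule out $N_\infty\geq 2$ at every $p$.

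For the first step I fix a realization of $(G,\rho)$, which is almost surely an infinite locally finite planar graph. Bernoulli$(p)$ site percolation is then a product measure on $\{0,1\}^{V(G)}$, hence tail trivial by the Kolmogorov zero--one law and positively associated by the Harris inequality. For $p\leq 1/2$, the marginal Bernoulli$(p)$ is pointwise stochastically dominated by Bernoulli$(1-p)$, so taking independent products over $V(G)$ gives the required domination $\sigma\preceq 1-\sigma$. Theorem~\ref{thm:perco} thus applies conditional on $(G,\rho)$ and yields $N_\infty\in\{0,\infty\}$ a.s.\ under the joint law.

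For the second step I couple $\sigma$ to $(G,\rho)$ by sampling iid Bernoulli$(p)$ marks on $V(G)$; averaging over $\sigma$ shows that the mass transport principle~\eqref{eq:MTP} extends to functionals depending on $(G,\rho,\sigma)$, so the set of open vertices is an invariant site percolation. I then invoke the Aldous--Lyons version of Burton--Keane: on a unimodular invariantly amenable rooted random graph, any invariant site percolation has at most one infinite cluster a.s. The proof assumes for contradiction the coexistence of $k\geq 2$ distinct infinite clusters with positive probability, extracts a positive density of encounter vertices via the mass transport principle, and derives a contradiction from the finitary invariant percolations with vanishing boundary-to-volume ratio guaranteed by invariant amenability. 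This gives $N_\infty\leq 1$ a.s.\ at every $p$.

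Intersecting the two conclusions: for $p\leq 1/2$ we get $N_\infty\in\{0,\infty\}\cap\{0,1\}=\{0\}$, ruling out infinite open components and yielding $p_c(G)\geq 1/2$ a.s.; for $p>p_c(G)$ the unique infinite cluster gives $p_u(G)\leq p_c(G)$, hence $p_u(G)=p_c(G)\geq 1/2$ a.s. The main obstacle is the clean execution of Step~2: while the unimodular Burton--Keane principle is standard in the Aldous--Lyons framework, one must carefully verify that the coupled measure on $(G,\rho,\sigma)$ remains unimodular and that ``invariant amenability'' is exactly the right hypothesis to run the density-of-encounter-points contradiction on a random rather than deterministic graph.
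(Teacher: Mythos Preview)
Your two-step plan matches the paper's: apply Theorem~\ref{thm:perco} for the dichotomy at $p\le 1/2$, then a unimodular Burton--Keane argument to exclude $N_\infty\ge 2$. Step~1 is fine. Step~2, however, conceals a genuine obstacle that you do not address and that the paper treats explicitly. The standard Burton--Keane argument manufactures \emph{trifurcation vertices}: open vertices whose removal splits their cluster into at least three infinite pieces. For \emph{site} percolation this can fail for purely combinatorial reasons---on the Kagome lattice, for example, the four neighbours of any vertex come in two adjacent pairs, so deleting a single vertex can separate at most two pieces, and no vertex is ever a trifurcation. Your sentence ``extracts a positive density of encounter vertices via the mass transport principle'' assumes this step goes through; on an arbitrary planar $(G,\rho)$ it need not, and no black-box Aldous--Lyons statement covers site percolation in this generality.

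The paper's fix is to sample, on top of $\sigma$, the free uniform spanning forest $\tau$ of the open cluster of $\rho$, and to locate trifurcation points of $\tau$ rather than of $\sigma$. Given $\sigma\in\mathrm{TriComp}_r\cap\{\sigma\equiv 1\text{ on }\Lambda_r(\rho)\}$, one conditions on $\tau$ outside $\Lambda_r(\rho)$ and observes that any completion to a spanning tree of $\Lambda_r(\rho)$ must contain a branch point linking the three incoming infinite arms; since there are at most $2^{|E(\Lambda_r(\rho))|}$ such completions, the FUSF puts mass at least $2^{-|E(\Lambda_r(\rho))|-2}$ on having a trifurcation of $\tau$ inside $\Lambda_r(\rho)$. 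Two applications of~\eqref{eq:MTP} then give a uniform positive lower bound on $\E[|\partial K_\omega(\rho)|/|K_\omega(\rho)|]$ for every finitary invariant $\omega$, contradicting invariant amenability. This FUSF trick is the substantive content of the proof; your proposal would need to supply it or an equivalent workaround.
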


Previously, Peled~\cite{Pel20} showed that there exists $c >0$ such that $p_c(G) \geq c$ for any planar, locally finite graph $G$ that may be represented by a circle packing with countably many accumulation points. Thanks to the celebrated work of He--Schramm~\cite{he1995hyperbolic}, this class of graphs includes all recurrent simple planar triangulations with countably many ends.
If view of~\cite{AngHutNacRay18}, the result of~\cite{Pel20} also implies $p_c(G) \geq c$ for every unimodular (edge) invariantly amenable with finite expected degree, since any circle packing of such graph has at most one accumulation point. We do not know whether all such graph are (vertex) invariantly amenable, but, as mentioned above, the implication is true when the root has a finite expected degree.
Corollary~\ref{cor:p-c} thus gives that~$p_c(G)\geq 1/2$ if~$(G,\rho)$ is the Benjamini--Schramm limit of finite planar maps and~$\rho$ has a finite expected degree.
This bound is optimal since the site percolation on the triangular lattice has~$p_c=1/2$~\cite{Kes80}.

We further extend the statement to the framework of divide and color models (Corollary~\ref{cor:divide-color}).
For a graph~$G=(V,E)$, we define $\mathrm{ER}(V)$ as the set of all equivalence relations on~$V$. 
Given~$\mu$ a probability measure on~$\mathrm{ER}(V)$, a {\em divide and color} model of parameter $p\in [0,1]$ is a site percolation $\sigma$ on $G$ defined as follows. First, one samples a partition $P = \{P_i\}_{i=1}^\infty$ from $\mu$. 
Then, for each $i \in \mathbb{N}$, we assign all vertices of $P_i$ the value $\sigma = 1$ with probability $p$ and $\sigma = 0$ otherwise, independently of all other classes. 
The model was first introduced by Häggström~\cite{Hag01} in the case when~$\mu$ is Bernoulli bond percolation; the critical value function in this case was studied in~\cite{BalBefTas13,BalBefTas13a}.
The Gibbs properties of the measure when~$\mu$ is the random-cluster model was considered in~\cite{Bal10}.

A version with a general partition~$\mu$ and several colors is due to Steif and Tykesson~\cite{SteTyk19}. Here we focus on a two coloring, which recovers the Ising, fuzzy Ising, and Voter models as particular cases, and allows us to apply Theorem~\ref{thm:perco} to the loop \O{n} model (Theorem~\ref{thm:LoopOn}).
We call a random partition~$P$ of~$V$ {\em finitary} if its elements are finite, almost surely, and {\em invariant} if~$(G,\rho,P)$ is unimodular.

\begin{corollary}\label{cor:divide-color}
	Let~$(G,\rho)$ be a rooted infinite locally finite unimodular invariantly amenable planar random graph.
	Sample~$\sigma$ from a divide and color model with parameter $p \leq 1/2$ induced by $\mu$, a finitary invariant partition. Then, $\sigma$ does not contain any infinite components of open sites, almost surely. 
\end{corollary}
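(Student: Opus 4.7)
The plan is to apply Theorem~\ref{thm:perco} to the quenched law of $\sigma$ given the partition $P$, and then rule out the infinitely-many-components case via a Burton--Keane-type argument in the style of Corollary~\ref{cor:p-c}.

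\textbf{Quenched hypotheses.} Write $\sigma_v = \xi_{i(v)}$, where $(\xi_i)$ are i.i.d.\ Bernoulli$(p)$ and $i(v)$ is the index of the block of $P$ containing $v$. Condition on $(G,\rho,P)$ and verify the three hypotheses of Theorem~\ref{thm:perco} for this conditional law. First, since every block $P_i$ is finite almost surely, a finite modification of the vertex coordinates of $\sigma$ either falls outside the conditional support or corresponds to flipping the values $\xi_i$ of finitely many blocks; in either case a tail event on $\{0,1\}^V$ pulls back to a tail event on $(\xi_i)_i$, which has probability $0$ or $1$ by Kolmogorov's zero--one law. Second, since $\sigma_v$ is a non-decreasing function of $(\xi_i)$, Harris' inequality yields positive association. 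Third, $p \leq 1/2$ means each $\xi_i$ is stochastically dominated by $1-\xi_i$, so Strassen combined with independence gives the required domination of $\sigma$ by $1-\sigma$. Theorem~\ref{thm:perco} now guarantees that, conditionally on $(G,\rho,P)$, the set $\{\sigma = 1\}$ has $0$ or $\infty$ infinite connected components almost surely; integrating transports this dichotomy to the annealed law of $(G,\rho,\sigma,P)$.

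\textbf{Ruling out infinitely many components.} Since $P$ is invariant and the coloring is an i.i.d.\ function of the partition, the enriched triple $(G,\rho,\sigma)$ is unimodular. Assume for contradiction that the event $\{N_\infty = \infty\}$ has positive probability. The idea is a Burton--Keane trifurcation argument adapted to the invariantly amenable unimodular setting, parallel to the proof of Corollary~\ref{cor:p-c}. The point of difference with the Bernoulli case is that single-vertex insertion tolerance is unavailable; however, the finitariness of $P$ supplies \emph{block} insertion tolerance, since recoloring a single block $P_i$ produces an absolutely continuous modification of $\sigma$ that affects only the finitely many vertices of $P_i$. Applying the mass transport principle~\eqref{eq:MTP} to $(G,\rho,\sigma,P)$, one shows that a positive density of blocks can be made, upon recoloring, into block trifurcations (blocks whose excision splits their infinite cluster into at least three infinite pieces). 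Invariant amenability then produces a finitary invariant percolation $\omega$ with $\E[|\partial K_\omega(\rho)|/|K_\omega(\rho)|]$ arbitrarily small, and pairing each trifurcating block in the interior of a cluster of $\omega$ with a distinct boundary vertex of that cluster yields the classical Burton--Keane contradiction.

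\textbf{Main obstacle.} The technical heart is the block-level Burton--Keane step: one must count block trifurcations inside $\omega$-clusters while controlling the interaction between the blocks of $P$ and the clusters of $\omega$. Because blocks are finite and $\omega$ is an arbitrary finitary invariant percolation, the mass transport principle allows one to first choose $\omega$ so that, with high probability, the block of the root lies strictly inside its $\omega$-cluster, and then average the trifurcation count over such clusters. This is the step where both the invariant amenability of $(G,\rho)$ and the finitariness of $P$ are used in an essential way; once it is carried out, the dichotomy from the first step forces the absence of infinite components, completing the proof.
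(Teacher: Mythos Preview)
Your quenched verification of the hypotheses of Theorem~\ref{thm:perco} is correct and matches the paper's one-line reduction.

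Where you diverge is in ruling out $N_\infty=\infty$. You propose \emph{block} trifurcations; the paper instead keeps the argument at the vertex level, exactly as in Corollary~\ref{cor:p-c}, and modifies only the finite-energy step. Concretely: after fixing $r$ so that $\mathrm{TriComp}_r$ has good probability, the paper introduces a second scale $R>r$ such that, with high $\mu$-probability, every partition block meeting $\Lambda_r(\rho)$ lies inside $\Lambda_R(\rho)$. On that event, recoloring those finitely many blocks to open costs at most $p^{-|\Lambda_r(\rho)|}$ and produces the event $\overline{\mathrm{TriComp}}_{r,R}$ (three infinite clusters in $\Lambda_{R-1}^c$ reaching $\Lambda_r$, with $\sigma\equiv 1$ on $\Lambda_r$). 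From there the FUSF construction and the vertex-trifurcation counting of Corollary~\ref{cor:p-c} run verbatim with $\Lambda_R$ replacing $\Lambda_r$, and the final MTP step averages over $(G,\rho)$ and $P$ jointly.

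Your block-trifurcation route has two genuine gaps. First, positive density of block trifurcations is not established: recoloring several blocks near $\rho$ may glue three infinite clusters without any single block whose removal re-separates them, and the paper already warns (for the Bernoulli case, i.e.\ singleton blocks) that combinatorial constraints can prevent vertex trifurcations altogether --- this is precisely why the FUSF is introduced. Second, the counting step needs each trifurcating block to sit strictly inside $K_\omega(\rho)$, but the contradiction with invariant amenability requires the lower bound on $\mathbb{E}[|\partial K_\omega(\rho)|/|K_\omega(\rho)|]$ to hold along a sequence of $\omega$'s with vanishing ratio; you cannot simultaneously ``choose $\omega$'' to swallow the (unboundedly large) blocks and let $\omega$ realise the infimum. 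The paper's two-scale device sidesteps both issues by absorbing the partition into the finite-energy step and leaving the Burton--Keane counting untouched at the vertex level.
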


Note that Corollary~\ref{cor:p-c} is a particular case of Corollary~\ref{cor:divide-color} obtained by taking the trivial partition made up of singletons. Another important particular case of Corollary~\ref{cor:divide-color} is when~$G$ is a fixed transitive amenable graph (eg. triangular lattice) and~$\mu$ is a finitary partition invariant to a group of transformations that acts transitively on~$G$ (eg. shifts).
This readily implies Proposition~\ref{prop:no-perc-xi}, the key ingredient in our proof of Theorem~\ref{thm:LoopOn}.

Corollaries~\ref{cor:p-c} and~\ref{cor:divide-color} follow from Theorem~\ref{thm:perco} once one rules out infinitely many infinite components.
The latter statement was proven in a number of cases: 
the classical argument of Burton and Keane~\cite{BurKea89} treats shift invariant site percolation processes on~$\Z^d$ with the finite energy property. This argument was then extended in~\cite[Sections~$6,8$]{AldLyo07} based on~\cite{LyoSch99,BenLyoPer99,BenLyoPerSch01} (see also~\cite[Theorem~5.9]{AngHutNacRay18}) to insertion-tolerant invariant bond percolation processes on unimodular graphs.
In~\cite[Theorem~7.9]{LyoPer17}, insertion-tolerant invariant site and bond percolation processes on transitive graphs were treated. The arguments in these works use insertion-tolerance only to construct an infinite component with at least three ends, so the assumption can be weakened.
The combination of these arguments extends the statement to the setting of Corollaries~\ref{cor:p-c} and~\ref{cor:divide-color}; we provide short proofs for completeness.

\subsection{Loop~\O{n} model} \label{sec:loopon}

 Let $\hexlattice=(\vertices{\hexlattice},\edges{\hexlattice})$ denote the
hexagonal lattice whose faces $\faces{\hexlattice}$ are centered at $\{k+\ell
e^{i\pi/3}:k,\ell\in\Z\}\subset\mathbb C$. 
A \emph{loop configuration} on~$\H$ is a spanning subgraph of~$\H$ in which every vertex has degree~$0$ or~$2$.
Denote the set of all loop configurations on~$\H$ by~$\spaceLoop{\H}$.
Note that each connected component of a loop configuration is either a cycle (that we call a {\em loop}) or a bi-infinite path.
A \emph{domain} is a finite subgraph $\domain=(\vertices{\domain},\edges{\domain})\subset\hexlattice$ consisting precisely of the
sets of vertices and edges which are on or contained inside a cycle on
$\hexlattice$.  

For any domain $\domain$ and~${\omega'}\in \spaceLoop{\H}$, we define
\[
\spaceLoop{\H;\domain;{\omega'}}
:=
\{
	\omega\in\spaceLoop{\H}
	:
	\edges{\omega}\setminus\edges{\domain}
	=
	\edges{{\omega'}}\setminus\edges{\domain}
\}.
\]
Let $n,x > 0$.  The \emph{loop~\O{n} model} on~$\domain$ with \emph{edge-weight}~$x$ and boundary condition~$\omega'$ is the probability measure $\muLoop_{\domain,n,x}^{\omega'}$ supported on $\spaceLoop{\H;\domain;{\omega'}}$ defined by
\[
	\muLoop_{\domain,n,x}^{\omega'}(\omega) = \tfrac{1}{Z_{\domain,n,x}^{\omega'}} \cdot  n^{\ell(\omega;\domain)}\cdot x^{|\omega\cap\domain|},
\]
where~$\ell(\omega;\domain)$ is the number of loops in $\omega$ intersecting $\vertices{\domain}$,
$|\omega\cap\Omega|$ is the number of edges in $\omega\cap\Omega$, and~$Z_{\domain,n,x}^{\omega'}$ is a normalizing
	constant (called the {\em partition function}) that
	renders~$\muLoop_{\domain,n,x}^{\omega'}$ a probability measure.

 Among particular cases of the loop~\O{n} model are the Ising model ($n=1$), $1/2$ Bernoulli site percolation ($n=x=1$), dimer model ($n=1,x=\infty$), integer-valued 1-Lipschitz function ($n=2$); see Fig.~\ref{fig:phase-diagram}.
There are heuristic connections to the spin~\O{n} model when~$n$ is integer; see~\cite{PelSpi17} for a survey.

The above definition can be extended to the infinite volume in a standard way using the Dobrushin, Landford and Ruelle (DLR) formalism; see~\cite{GlaLam25} for more details. We endow the set of loop configurations $\spaceLoop{\H}$ with the $\sigma$-algebra generated by cylinder events. A measure $\mu$ on this space is a \emph{Gibbs measure}
	if, for any domain $\domain$
	and for $\mu$-almost every ${\omega'}$,
	the measure $\mu$ conditional on $\{\omega\in\spaceLoop{\H;\domain;{\omega'}}\}$
	equals $\muLoop_{\domain,n,x}^{\omega'}$.

A Gibbs measure $\mu$ is called translation-invariant if 
\[
\mu(A) = \mu (\gamma^{-1} A)
\] 
for any translation $\gamma$ of the hexagonal lattice and any event $A$. 
For~$r\in\N$, define~$\ballloop_r$ as the subgraph of~$\H$ induced by the vertices belonging to the faces of~$\H$ centered at~$k+\ell
e^{i\pi/3}$ with~$|k\pm \ell| \leq r$.
Define also an annulus~$A_r := \ballloop_{2r}\setminus \ballloop_r$.

\begin{theorem}\label{thm:LoopOn}
	Fix $(n,x) \in [1,2] \times [1/\sqrt{2},1]$, and let $\mathbb{P}$ be a translation-invariant Gibbs measure for the loop \O{n} model with edge-weight~$x$. Then, every loop configuration has infinitely many loops surrounding every face, $\mathbb{P}$ almost surely.

	If, in addition, $nx^2 \leq 1$, then for some $c >0$, any loop configuration $\omega'$, and any $r >2$,
	\begin{equation}\label{eq:RSW}
		c\leq \mathbb{P}_{B_{2k},n,x}^{\omega'}[\exists\text{ a loop in $A_k$ surrounding } 0]\leq 1-c.
	\end{equation}
\end{theorem}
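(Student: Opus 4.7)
The plan is to reduce the loop statement to the absence of infinite face clusters in $\omega$, and then invoke Corollary~\ref{cor:divide-color} via a divide-and-color representation arising from an Edwards--Sokal-type coupling of the loop~\O{n} measure.

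\textbf{Reduction.} I claim that ``every face of $\H$ is surrounded by infinitely many loops of $\omega$'' is equivalent to ``no face of $\H$ has an infinite face cluster,'' where the face cluster of $f$ is the connected component of $f$ in the bond percolation on the triangular dual lattice whose open bonds correspond to primal edges \emph{not} in $\omega$. One direction is immediate, as a face $f$ with $0$ surrounding loops must lie in an infinite face cluster. For the converse, if no infinite face cluster exists, the face cluster of any $f$ is bounded by a loop around $f$; iterating this outward (across the outermost boundary loop of the face cluster just outside the previous one) yields an infinite nested sequence of loops around $f$.

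\textbf{Edwards--Sokal coupling.} Using $n = 1 + (n-1)$, couple $\omega \sim \P$ with independent labels marking each loop type-$A$ with probability $1/n$ and type-$B$ with probability $(n-1)/n$; let $\omega_A, \omega_B$ be the sub-configurations. Propagate a $\pm$-spin $\tau \colon F(\H) \to \{\pm 1\}$ from a reference face, flipping across each type-$A$ loop. The joint Radon--Nikodym weight factorises as $x^{|\omega_A|} \cdot x^{|\omega_B|} (n-1)^{\ell(\omega_B)}$, from which: conditional on $\omega_B$, the loops $\omega_A$ form a loop~\O{1} configuration with edge weight $x$ in $\H \setminus \omega_B$, and $\tau$ is the associated Ising spin (necessarily constant on face-clusters of $\omega_B$); conditional on $\omega_A$, the loops $\omega_B$ form a loop~\O{n-1} configuration in the complement. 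Since the critical edge weight of loop~\O{1} on $\H$ equals $1/\sqrt{3}$ and $x \geq 1/\sqrt{2} > 1/\sqrt{3}$, the conditional Ising sits in the high-temperature regime.

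\textbf{Divide-and-color application.} Via the Fortuin--Kasteleyn representation, the conditional Ising spin $\tau$ given $\omega_B$ arises as a divide-and-color model: sample an FK-Ising percolation --- which at high temperature has a.s.~finite clusters, providing a finitary invariant partition of $F(\H)$ --- then independently colour each cluster $\pm$ with probability $1/2$. The ambient random planar graph inherits invariant amenability and unimodularity from $\H$. Corollary~\ref{cor:divide-color} hence yields no infinite monochromatic $\tau$-cluster, conditional on $\omega_B$ and therefore unconditionally. Since $\omega_A$-face-clusters are monochromatic in $\tau$, and face-clusters of $\omega = \omega_A \cup \omega_B$ are subsets of the $\omega_A$-face-clusters containing them, all face clusters of $\omega$ are a.s.~finite, establishing the first assertion.

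\textbf{RSW and main obstacle.} The bound~\eqref{eq:RSW} in the subregime $nx^2 \leq 1$ should follow by adapting the coupling to the finite-volume measure $\muLoop_{\ballloop_{2k},n,x}^{\omega'}$ and combining with an FKG-based crossing argument, aided in this regime by the parafermionic observable. The two applications of Theorem~\ref{thm:perco} mentioned in the introduction likely correspond to exploiting both orientations of the coupling above (conditioning on $\omega_A$ and on $\omega_B$). The primary obstacle is verifying the hypotheses of Corollary~\ref{cor:divide-color} in the quenched random-planar-graph setting: the invariant amenability and unimodularity of the effective random planar graph obtained after conditioning on $\omega_B$, together with the a.s.~finiteness of FK-Ising clusters on this random graph, both require careful justification.
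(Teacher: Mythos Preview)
Your Edwards--Sokal decomposition into type-$A$ and type-$B$ loops is correct, and the reduction to ``no infinite face cluster'' is fine. But there is a genuine gap at the step where you invoke Corollary~\ref{cor:divide-color}: you need the FK-Ising partition on the quenched graph to be finitary, and you justify this by saying $x > 1/\sqrt{3}$ puts the Ising in the high-temperature regime. That is the critical value on the \emph{full} hexagonal lattice, not on the random graph obtained after conditioning on $\omega_B$. Conditioning on $\omega_B$ forces $\tau$ to be constant on face-clusters of $\omega_B$, which amounts to adding infinite ferromagnetic couplings; this pushes the effective Ising toward the ordered phase, so there is no reason the FK clusters should remain finite. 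Your argument also never uses the actual threshold $x \geq 1/\sqrt{2}$ (as opposed to $1/\sqrt{3}$), which should already signal that something is off. You flag this obstacle yourself at the end, but it is not a technicality to be patched---it is the entire difficulty.

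The paper sidesteps this circularity by applying Corollary~\ref{cor:divide-color} twice, and never to an FK-Ising partition. First, it defines a ``blocking'' site percolation $\xi$ on $\upvert(\H)$: include each loop of $\omega$ with probability $(n-1)/n$ and each degree-$0$ vertex with probability $1-x^2$. The partition here is just the loops and singletons of $\omega$, finitary by construction; since both $(n-1)/n \leq 1/2$ and $1-x^2 \leq 1/2$ precisely on the stated parameter range, Corollary~\ref{cor:divide-color} gives that $\xi$ does not percolate. Second, each (now finite) cluster of the induced bond percolation $\Delta(\xi)$ is colored $\pm 1$ at parameter $1/2$, and Corollary~\ref{cor:divide-color} again rules out infinite monochromatic clusters. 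A finite-volume resampling computation then shows that the domain walls of this coloring, after matching boundary data, have the same law as the ``free'' part $\omega^{\mathrm{free}} \subset \omega$, yielding the contradiction. So the two applications are not ``both orientations of the coupling'' as you guessed, but two nested divide-and-color steps, and the condition $x \geq 1/\sqrt{2}$ enters through the degree-$0$ vertices, not through any Ising criticality. For~\eqref{eq:RSW}, the paper simply invokes the dichotomy of~\cite{DumGlaPel21}; no parafermionic input is used.
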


\begin{figure}
	\begin{center}
		\includegraphics[width=0.58\textwidth]{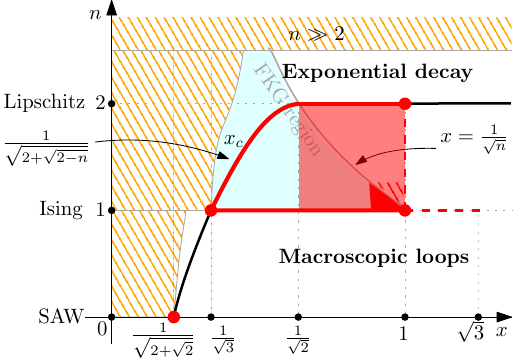}
	\end{center}
	\caption{The phase diagram of the loop \O{n} model: above~$n=2$ and to the left of $x_c(n)$, loop lengths should have exponential tails; below and on the curve~$x_c(n)$, the loops should be macroscopic and converge to $\mathrm{CLE}_\kappa$. Established regions are in orange and red respectively. The current work treats the opaque red rectangle.}
	\label{fig:phase-diagram}
\end{figure}

The Russo--Seymour--Welsh (RSW) type estimates provided by~\eqref{eq:RSW} imply the macroscopic behavior of the loop~\O{n} model: existence of loops at every scale.
Theorem~\ref{thm:LoopOn} is the first result to prove such behavior in a two-dimensional non-perturbative region of parameters. 
Previously, macroscopic behavior has been proven in several cases (see Fig.~\ref{fig:phase-diagram}):
\begin{itemize}
	\item At~$x=x_c(n)$ when~$n\in [1,2]$~\cite{DumGlaPel21},
	\item An area~$n\in [1, 1+\varepsilon], x\in [1-\varepsilon,1/\sqrt{n}]$ around~$n=x=1$~\cite{CraGlaHarPel25},
	\item $n=2, x\in [1/\sqrt{2}, 1]$~\cite{GlaMan21,GlaLam25}.
\end{itemize}
In addition, this is standard for the loop~\O{1} model with~$x\in [1/\sqrt{3},1]$ which corresponds to the Ising model on the triangular lattice at~$T>T_c$ (eg. follows from~\cite{Tas16}). 
We also mention that it has been shown that any translation-invariant Gibbs measure for the loop \O{1} model at~$x=\infty$ (fully-packed case, whose complement is uniform dimers on the hexagonal lattice) either exhibits infinitely many loops around every face or a unique bi-infinite path~\cite{GlaRey24}.
The macroscopic behavior is expected to hold for all $n\in[0,2]$ when~$x\geq x_c(n)$, where
\[
	x_c(n):=\tfrac{1}{\sqrt{2+\sqrt{2-n}}}.
\]
This value was derived first at~$n=2$~\cite{DomMukNie81,Fan72} based on a relation with the Ashkin--Teller model and then for all~$n\in [0,2]$~\cite{Nie82} using the Coulomb gas formalism.

The macroscopic behavior is a weaker version of scale invariance and is in agreement with the conjectured convergence to the conformal loop ensemble (CLE) with an appropriate~$\kappa = \kappa (n,x)$; see~\cite[Section~5.6]{KagNie04}.
The convergence has been proved only for Bernoulli site percolation ($n=x=1$)~\cite{Smi01,Smi01a,CamNew06,KhrSmi21}
and for the critical Ising model~($n=1,\, x=1/{\sqrt{3}}$)~\cite{Smi10,CheSmi11,CheDumHonKemSmi14}.

 When~$n=2$, the loops are level lines of an integer-valued 1-Lipschitz function on the faces of~$\H$.
Theorem~\ref{thm:LoopOn} provides a new proof of its delocalization for all~$x\in [x_c(2),1]$.
Localization is expected for all~$x< x_c(2)$; this has been proven for~$x< x_c(1) + \varepsilon$ (note that~$x_c(1)=1/\sqrt{3}$ and~$x_c(2)=1/\sqrt{2}$).

Besides the use of Theorem~\ref{thm:perco}, the proof of Theorem~\ref{thm:LoopOn} combines elements of the geometric representation in~\cite{GlaLam25}, the `flipping' transformation in~\cite{GlaMan21}, and the defect percolation construction of~\cite{CraGlaHarPel25}. This approach rules out bi-infinite paths even outside the positive association regime. Inside the positive association regime, the dichotomy theorem of~\cite{DumGlaPel21} allows us to upgrade the infinite loop state to a RSW-type result. We mention that~\cite{KohTas23} proves RSW estimates for general positively correlated percolation processes on lattice with $\Z^2$ symmetries. Our approach does not rely on this work, and we do not believe it can be used to imply the conclusions of Theorem~\ref{thm:LoopOn}.

\subsection{Sketch of the proof of Theorem~\ref{thm:perco}}
\label{sec:sketch}

In this subsection, we present the main ideas of the proof of Theorem~\ref{thm:perco}.
For clarity of presentation, here we assume that~$G$ has an embedding with no accumulation points.
It is easy to show that there exists an increasing sequence of simply connected open sets~$\domain_n\subset\C$ that exhausts the plane and whose boundaries are Jordan curves that intersect~$G$ only at vertices; see Lemma~\ref{lem:jordan-domains} for a slightly weaker version in the general setting.

Assume~$N_\infty\geq 1$ a.s.
By tail triviality of~$\sigma$, for any~$\varepsilon>0$ and~$n$ large enough,
\begin{equation}\label{eq:one-arm}
	\P(\partial\domain_n\xlra{\domain_n^c\cap \{\sigma=1\}} \infty) > 1-\varepsilon,
\end{equation}
where the event above states the existence of an infinite open path contained in~$\domain_n^c$ and starting at a vertex on~$\partial\domain_n$.

The main step of the proof is splitting~$\partial\domain_n$ into several arcs, each connected to infinity in~$\domain_n^c$ with probability almost one. Similar approaches appeared in the context of Voronoi percolation on $\mathbb{R}^2$~\cite{Tas16}, and in level-set percolations of one-Lipschitz functions on cubic, vertex-transitive graphs~\cite{Kar23}.

Denote the vertices on~$\partial\domain_n$ by~$v_1, \dots, v_L$, taken in clockwise order.
For each~$i,j\in \{1,\dots,L\}$, let~$p_{i,j}$ denote the probability that~$\{v_i,v_{i+1},\dots,v_j\}$ does {\em not} connect to infinity in~$\domain_n^c$.
Clearly, $p_{1,i}$ is decreasing and $p_{1,L} < \varepsilon$.
Define
\[
	i^*:= \min \{i\in \{1,\dots,L\} \colon p_{1,i} < \sqrt{2\varepsilon}\}.
\]
By the FKG inequality and the definition of~$n$, 
\[
	p_{1,i^*-1}\cdot p_{i^*,L} \leq p_{1,L} < \varepsilon.
\]
By the minimality of~$i^*$, we have that~$p_{1,i^*-1} \geq \sqrt{2 \varepsilon}$, whence~$p_{i^*,L}< \sqrt{\varepsilon/2}$.
Since~$\sigma$ is dominated by~$1-\sigma$, we have that
\[
	p_{i^*,i^*} \geq \P(\sigma(v_{i^*}) = 0)  \geq 1/2.
\]
Using the FKG inequality again, we get
\[
	p_{i^*+1,L} \leq \tfrac{p_{i^*,L}}{p_{i^*,i^*}}< \sqrt{2\varepsilon}. 
\]
Thus, the arcs~$\{1,\dots,v_{i^*}\}$ and~$\{v_{i^*+1},\dots,v_L\}$ both connect to infinity in~$\domain_n^c$ with probability at least~$1-\sqrt{2\varepsilon}$, as desired.

Choosing~$\varepsilon>0$ small enough and iterating this process, for any integer~$k$, we can split~$\partial\domain_n$ (for~$n$ large enough) into~$2k$ arcs, each connecting to infinity by open paths with probability almost one.
Since~$\sigma$ is dominated by~$1-\sigma$, this statement also holds if we replace open paths by closed ones.
Define~$\mathrm{Arm}_{2k}$ as the event that each arc connects to infinity by both open and closed paths {\em simultaneously}; see Fig.~\ref{fig:catalan}.
By the union bound, the probability of~$\mathrm{Arm}_{2k}$ is almost one; see Lemma~\ref{lem:crossings} for a precise statement.

Finally, once~$\mathrm{Arm}_{2k}$ occurs, planar topology implies that there are at least~$k+1$ open and closed infinite connected components in total; see Lemma~\ref{lem:catalan} and Figure~\ref{fig:catalan}.
If~$\sigma$ and~$1-\sigma$ have the same law (eg. Bernoulli site percolation at~$p = 1/2$), then
\[
	1 - \varepsilon < \P(N_\infty(\sigma) + N_\infty(1-\sigma) \geq k+1) \leq 2 \cdot \P (N_\infty(\sigma) > (k+1)/2).  
\]
Taking the intersection over~$k$, we get that~$N_\infty(\sigma) = \infty$ with probability at least~$1/2-\varepsilon$ and, hence, with probability one, by tail triviality.

If we only have that~$\sigma$ is dominated by~$1-\sigma$, we use Strassen's theorem that states existence of a monotone coupling~$(\sigma,\tau)$, where~$\tau$ has the same law as~$1-\sigma$ and~$\sigma \leq \tau$, almost surely.
Applying the above reasoning for components in~$1-\tau$ instead of~$1-\sigma$ completes the proof; see Lemma~\ref{lem:catalan}.

\subsection*{Acknowledgements}
The project was inspired by a discussion between the second author and Ron Peled about no-coexistence in non translation-invariant percolation processes on $\mathbb{Z}^2$. We thank him for sharing his insights. We also thank Diederik van Engelenburg, Russell Lyons, and Gabor Pete for  fruitful discussions on unimodular graphs.
This collaboration started during an open problem session at a conference at the ETH Zurich in 2024, and we thank Hugo Duminil-Copin and Vincent Tassion for organizing this wonderful event.
Parts of the project were completed during the visits of AG and MH to Northeastern University and the  University of Innsbruck, and we would like to thank these institutions for their hospitality.

This research was funded in part by the Austrian Science Fund (FWF) 10.55776/P34713.

\section{Planar percolation models: proof of Theorem~\ref{thm:perco}}
\label{sec:perco}

Let $G = (V,E)$ be a planar, connected, locally finite graph. We will also assume $G$ is simple, as multiple edges and self-loops do not affect site percolation. Given an embedding $\phi: V \to \mathbb{S}^2$, we call~$a\in \mathbb{S}^2$ an {\em accumulation point} if its arbitrarily small neighborhoods intersect infinitely many edges of~$G$.
Denote the set of accumulation points by~$A$.
We call~$\phi$ {\em well-separated} if~$\phi(G)$ is disjoint from $A$.
In Proposition~\ref{prop:embedding} below, we show that~$G$ always has a well-separated embedding.

We start by introducing useful notation.
For $S \subseteq \mathbb{S}^2$, let $G_{\phi}[S]$ be the subgraph of $G$ induced by the preimage of $\phi(V(G)) \cap S$. 
For $R\subset E$ finite and connected, we define faces of $R$ under $\phi$ to be the path connected components of $\mathbb{S}^2 \setminus \phi(R)$.
Since $\phi(R)$ is the union of finitely many simple paths, the faces of~$R$ are simply connected open sets.

\begin{lemma}\label{lem:sausage}
	Let $G$ be a planar, connected, locally finite graph and $\phi$ an embedding of $G$ into $\mathbb{S}^2$. Consider any $R \subset E$, a finite, connected set of edges, with faces $(F_1, \dots, F_n)$. Then, there exists $\eps >0$ sufficiently small and an embedding $\phi' = \phi'(\phi,R)$ such that~$\phi = \phi'$ on~$R$ and $d (\phi'(G_\phi[F_j]), \phi(R)) > \eps$ 
	for every $1 \leq j \leq n$, where $d(\cdot, \cdot)$ is the Euclidean distance.
\end{lemma}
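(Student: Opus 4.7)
The plan is to build $\phi'$ face-by-face, leaving $\phi$ unchanged on $R$. Since the paper already notes that each face $F_j$ is a simply connected open subset of $\mathbb{S}^2$ with non-empty complement, each $F_j$ is homeomorphic to the open unit disk $D\subset\mathbb{R}^2$. I fix a homeomorphism $\psi_j\colon F_j\to D$ and define the radial shrink
\[
\tilde h_j\colon F_j\to F_j,\qquad \tilde h_j(x):=\psi_j^{-1}\bigl(\tfrac12\,\psi_j(x)\bigr),
\]
a homeomorphism of $F_j$ onto the open set $U_j:=\psi_j^{-1}(\tfrac12 D)$. By construction, the closure $\overline{U_j}$ is a \emph{compact} subset of $F_j$, so $\eps_j:=d(\overline{U_j},\phi(R))>0$. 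I will take $\eps:=\tfrac12\min_j\eps_j$.

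On the internal part of each $F_j$ I set $\phi'(v):=\tilde h_j(\phi(v))$ for every vertex $v$ of $G_\phi[F_j]$, and $\phi'(e):=\tilde h_j(\phi(e))$ for every edge $e$ of $G_\phi[F_j]$. Since $\tilde h_j$ is a homeomorphism, these images remain pairwise disjoint arcs, entirely contained in $\overline{U_j}$, hence at distance $\ge\eps_j\ge 2\eps$ from $\phi(R)$. On $R$ itself I set $\phi':=\phi$. What remains is to redraw the \emph{bridge edges} $e\in E\setminus R$ that have at least one endpoint in $V(R)$: the interior of each such $e$ lies in some $F_j$, and one or both endpoints of $e$ are fixed by $\phi'$, while any endpoint $v$ lying in $V(G_\phi[F_j])$ has been moved to $\tilde h_j(\phi(v))\in U_j$.

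The main obstacle is the coherent redrawing of these bridge edges. My approach is to concatenate three pieces: an initial segment of the original $\phi(e)$, lying in a small neighbourhood of its fixed endpoint; a \emph{transition arc} threading the annular region $F_j\setminus\overline{U_j}$; and (for edges with a moved endpoint) the re-scaled body $\tilde h_j(\phi(e))$ ending at the displaced vertex. These transition arcs must be pairwise disjoint and also disjoint from everything already placed inside $\overline{U_j}$. The delicate sub-case is a vertex $u\in V(R)$ from which infinitely many bridge edges emanate, or at which vertices of $V(G_\phi[F_j])$ accumulate in $\phi$. To treat it, I would exploit the rotation system of $\phi$ at $\phi(u)$ and its image on $\partial U_j$: the cyclic orders match by planarity of $\phi$, and this combinatorial compatibility allows an enumeration-based inductive construction of pairwise disjoint transition arcs in the thin annulus. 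Crucially, bridge edges themselves are allowed to come arbitrarily close to $\phi(R)$, which is exactly why only the internal part $G_\phi[F_j]$ appears in the distance bound.
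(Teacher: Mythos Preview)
Your strategy---shrink $G_\phi[F_j]$ into a compact subset of $F_j$ via a disk homeomorphism, then reroute the bridge edges through the resulting annulus---is exactly the paper's approach. Two substantive remarks.

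First, the ``delicate sub-case'' you flag does not arise. Since $G$ is locally finite and $V(R)$ is finite, each vertex of $V(R)$ has only finitely many incident edges, so the set $E_j$ of bridge edges into $F_j$ is \emph{finite}. The accumulation of vertices of $G_\phi[F_j]$ at points of $\phi(R)$ is likewise harmless: after the shrink, all of those vertices sit in the compact set $\overline{U_j}$, well away from $\phi(R)$. So the enumeration-based inductive construction you envisage reduces to a finite one, and the rotation-system compatibility you need is just that finitely many cyclically ordered pairs of points on two nested Jordan curves can be joined by disjoint arcs in the annulus between them.

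Second, you leave the transition-arc construction as an outline, and that is precisely the content of the lemma. The paper carries it out: it first cuts each bridge edge at a midpoint $y_{j,k}$, removes the half near $R$ from $F_j$ to get a simply connected domain $F_j'$, and applies the Riemann map to $F_j'$ (not to $F_j$). The points $y_{j,k}$ then acquire well-defined images $z_{j,k}$ on the boundary circle, with cyclic order preserved by holomorphicity. The connecting arcs are built one at a time by a finite induction, each step using a Jordan curve to carve out a fresh simply connected region containing the next pair $(y_{j,k+1},z_{j,k+1})$. Note that with your arbitrary homeomorphism $\psi_j\colon F_j\to D$, the curve $\psi_j(\phi(e)\cap F_j)$ need not have a limit on $\partial D$, so the starting point of your ``re-scaled body'' $\tilde h_j(\phi(e))$ on $\partial U_j$ is not well-defined; the paper's half-edge trick avoids this by anchoring the connection at the interior point $y_{j,k}$ rather than at the boundary.
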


\begin{proof}
We begin by setting $\phi' = \phi$ on $R$, and construct the rest of the embedding in a piecewise fashion. For $j \in \{1,\dots, n\}$, denote $G_j := G_{\phi}[F_j]$ and let $E_j= \{e_{j,1}, \dots, e_{j,m_j}\}$ be the set of edges between $V(R)$ and $V(G_j)$. 
Define $r_j$ as one half of the minimal length of $\phi(e_{j,k})$.
For every $k \in \{1, \dots, m_j\}$, write~$\phi(e_{j,k})$ as a concatenation of two closed paths~$\gamma_{j,k}^{1}\circ\gamma_{j,k}^{2}$, where $\gamma_{j,k}^1$ starts in~$V(R)$ and has length $r_j$; define $y_{j,k}:=\gamma_{j,k}^{1}\cap\gamma_{j,k}^{2}$.

Let $F_j' = F_j \setminus (\gamma_{j,1}^1\cup\dots\cup\gamma_{j,m_j}^1)$. 
This is an open, simply connected set. Its boundary is given by the union of a (possibly empty) Jordan curve and a forest. Such cycle-rooted forests embedded in the plane can be parameterized by an oriented path that covers each edge either once or twice.
We relabel the edges of $E_j$ so that $\{y_{j,1}, y_{j,2}, \dots  y_{j,m_j}\}$ appear in clockwise order on this path.

Pick some $x_j$ in the interior of $F_j'$, and set $\eps_j < d(x_j, \partial F_j')/2$. By the Riemann mapping theorem, there exists a biholomorphic map $\psi_j$ taking $F_j'$ to $B_{\eps_j}(x_j)$, the Euclidean ball of radius $\eps_j$ around $x_j$ in $\mathbb{S}^2$. 
The embedding of $G_j$ under $\phi\circ\psi_j$ is at distance at least~$\eps_j$ to $\phi(R)$.
By holomorphicity of~$\psi_j$, the points~$z_{j,k}:=\psi_j(y_{j,k})\in\partial B_{\eps_j}(x_j)$ are clockwise ordered.

Now, consider the topological annulus $T_0 = F_j' \setminus B_{\eps_j}(x_j)$. Since $T_0$ is simply connected we can find a simple path $\ell_1$ from $y_{j,1}$ to $z_{j,1}$ in $T_0$; similarly, find a path~$\ell_2$ from $y_{j,2}$ to $z_{j,2}$ in $T_0 \setminus \ell_1$. We now define the domain $T_k$ and the path $\ell_k$ iteratively. Let $J_k$ be the Jordan curve given by concatenating the clockwise-oriented path from $y_{j,k}$ to $y_{j,1}$ along $\partial F_j'$, $\ell_1$, the counterclockwise-oriented path from $z_{j,1}$ to $z_{j,k}$ along $\partial B_{\eps_j}(x_j)$, and $\ell_k$. Let $T_k$ be the component of $\mathbb{S}^2\setminus J_k$ which is entirely contained in $F_j'$. This component is disjoint from $\{\ell_2, \dots, \ell_{k-1}\}$, and contains $y_{j,k+1}$ and $z_{j,k+1}$ in its boundary. We set $\ell_{k+1}$ to be an arbitrary simple path from $y_{j,k+1}$ to $z_{j,k+1}$ in $T_k$. For every $e_{j,k} \in E_j$, we set $\phi'(e_j)$ to be the concatenation of $\gamma_{j,k}^1$, $\ell_k$, and $\psi_j(\gamma_{j,k}^2)$. This produces an embedding of $G_j$, $R$, and all edges between the two graphs. Repeating this procedure for all faces and setting $\varepsilon = \min_j \varepsilon_j$ completes the proof. 
\end{proof}

\begin{figure}[t]
\centering 
\begin{subfigure}[]{0.3 \textwidth}
\includegraphics[scale=0.35,page=1]{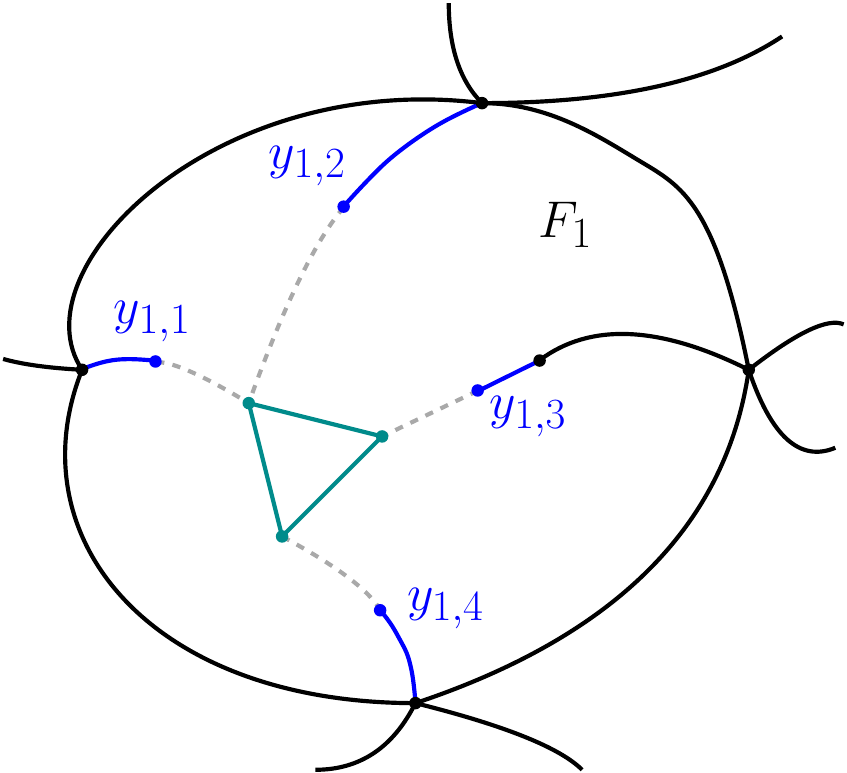} \end{subfigure} {\color{white} cec} 
\begin{subfigure}[]{0.3 \textwidth}
\includegraphics[scale=0.35,page = 2]{sausage3.pdf} 
\end{subfigure} {\color{white} cec}
\begin{subfigure}[]{0.3 \textwidth}
\includegraphics[scale=0.35,page = 3]{sausage3.pdf}
\end{subfigure}
\caption{\textsc{Left}: A portion $\phi(R)$ bounding the face $F_1$  in black, the graph $G_1$ in cyan, and the edges of $E_1$ in dashed gray. The half edges $\gamma_{1,k}^1$, with $y_{1,k}$ as their endpoints, in blue. 
\textsc{Middle}: The new embedding of $G_1$ in a ball contained in the interior in green. The image of the half edges $\gamma_{1,k}^2$ with their endpoints $z_{1,k}$ in burgundy. 
\textsc{Right}: A choice for the paths connecting the first three sets of endpoints, and the domain $T_3$ in purple. }
\end{figure}

\begin{proposition}\label{prop:embedding}
	Any planar, connected, locally finite graph has a well-separated embedding.
\end{proposition}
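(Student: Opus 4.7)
The plan is to exhaust $G$ by finite connected subgraphs and iteratively apply Lemma~\ref{lem:sausage} to push the tail of the graph away from any fixed finite piece. Fix any embedding $\phi_0\colon G\to\bbS^2$ (which exists by combinatorial planarity and local finiteness) and choose an increasing exhaustion $R_1\subset R_2\subset\cdots$ of $E$ by finite connected subgraphs with $\bigcup_n V(R_n)=V$; this is possible by taking, for instance, combinatorial balls around a root. Define $\phi_n$ inductively as the embedding produced by applying the lemma to $\phi_{n-1}$ with $R=R_n$, and let $\eps_n>0$ denote the associated separation constant provided by the lemma.

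Two invariants will drive the argument. First, $\phi_n=\phi_{n-1}$ on $R_n$; since $R_n\supset R_{n-1}$ this gives by induction $\phi_m=\phi_n$ on $R_n$ for every $m\ge n$, so each edge and vertex is fixed forever once it enters the exhaustion. Second, for every $m\ge n$ the image $\phi_m(E\setminus R_n)$ lies inside the finite union of closed balls placed by the lemma at step $n$, each situated at Euclidean distance at least $\eps_n$ from $\phi_n(R_n)$. The second invariant is the crucial structural property and holds because every subsequent application of the lemma acts entirely inside these balls: edges of $R_{n+1}\setminus R_n$ are already positioned there, and the lemma at step $n+1$ only nests further, smaller balls inside the existing ones. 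The freedom in choosing the paths $\ell_k$ in the construction of the lemma can also be used to avoid clashes with the finitely many previously placed edges, preserving injectivity throughout.

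The pointwise limit $\phi:=\lim_n\phi_n$ is therefore well defined on every edge and vertex (agreeing with $\phi_n$ on $R_n$), continuous (on each edge it coincides with some $\phi_n$), and injective (inherited from $\phi_n$ on $R_n$ and separated from the tail by $\eps_n$); hence it is a legitimate embedding. To verify well-separation, fix $v\in V$ and choose $n$ so that $v$ together with all its incident edges lies in $R_n$, which is possible by local finiteness. By the second invariant the ball of radius $\eps_n$ around $\phi(v)$ meets $\phi(E\setminus R_n)$ only along the finitely many edges of $E\setminus R_n$ emanating from the finite set $V(R_n)$, and it contains only finitely many edges of $R_n$; thus only finitely many edges of $G$ cross a neighborhood of $\phi(v)$, so $\phi(v)\notin A$. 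The main obstacle is establishing the second invariant and, with it, the injectivity of $\phi$; this is essentially a bookkeeping task based on the strict nesting of the balls from successive iterations and the flexibility of the lemma in choosing the paths $\ell_k$ to avoid a fixed finite set of already-embedded edges.
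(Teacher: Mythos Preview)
Your proof is correct and follows essentially the same route as the paper: iterate Lemma~\ref{lem:sausage} along an exhaustion by connected finite edge sets, observe that images stabilize, and pass to the limit. Your treatment is in fact more explicit than the paper's about the key nesting invariant (your ``second invariant''); one small inaccuracy is that the flexibility needed to force the nesting lies in the choice of the centers $x_j$ of the new balls, not in the connecting paths $\ell_k$.
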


\begin{proof}
Let $\phi_0$ be an arbitrary planar embedding, and let $\{e_i\}$ be an enumeration of the edges of $G$ such that $R_k = \{e_1, \dots, e_k\}$ is a connected set of edges for every $k$. We now apply Lemma~\ref{lem:sausage} iteratively: we begin with $(\phi_0, R_1)$, and set $\phi_k = \phi'(\phi_{k-1},R_k)$ for every~$k$. For any $j \geq k$, $\phi_j(e_k) = \phi_k(e_k)$, so the image of every edge stabilizes after finitely many applications of this process. Therefore, the limiting map $\phi = \lim_k \phi_k$ is well-defined on all of $G$. Furthermore, the set $
\{x : d(x, R_k) \leq \varepsilon_k\}$ cannot contain any accumulation points of~$\phi$ for any~$k$, and thus $A$ is disjoint from $\phi(G)$, as required. 
\end{proof}

For the rest of the section, we consider a planar, connected, locally finite graph~$G$ with a fixed well-separated embedding $\phi$ and identify vertices and edges with their images.
Let~$\sigma$ be a tail trivial site percolation process on~$G$.
For any sets $S,T, \domain \subseteq \mathbb{S}^2$ and~$U\subset V$, define
\[
\{S \xleftrightarrow{\domain \; \cap\;  U} T\} := \{\exists \text{ connected component } \mathcal{C} \subseteq (\domain \cap U ) \text{ with } \overline{\mathcal{C}} \cap S, \overline{\mathcal{C}} \cap T  \neq \emptyset\},
\] 
where~$\overline{\calC}$ is the topological closure of~$\calC$. 
If $S$ and $T$ are finite sets of vertices, the above agrees with the usual, combinatorial notion of connectivity.
When~$\domain = \mathbb{S}^2$ and~$U=\{\sigma=1\}$, we will omit the superscript for brevity.  

We call $a \in A$ an {\em active} point of a site percolation $\sigma$ if $v  \xleftrightarrow{} a$ for some~$v\in V$. 
Heuristically, this means that some infinite component of $\sigma$ must approach $a$, in the weak sense outlined above. Since active points must be accumulation points, the event that~$a\in A$ is an active point of~$\sigma$ is in the tail sigma-algebra, and thus has probability zero or one. 
We note that $a$ being active does not necessarily imply the existence of an infinite path that approaches $a$ in $\sigma$; it is possible that there exists a connected component of $\sigma$ which contains a sequence of infinite paths $\{\gamma_k\}$, each approaching a point $a_k \neq a$, such that $a = \lim a_k$. 
We now show that~$\sigma$ has an active accumulation point if~$1\leq N_\infty(\sigma) <\infty$. 
Note that this can be false if $N_\infty(\sigma) =\infty$; indeed, the usual embedding of the $3$-regular tree in the unit disk has no active points at any $p \in (1/2,1)$.

\begin{lemma}\label{lem:activating-accum-pt}
	Let $G$ be a planar, connected, locally finite graph.
	Consider a tail trivial site percolation process~$\sigma$ on~$G$.
	Suppose that $1 \leq N_{\infty}(\sigma) < \infty$ a.s.
	Then, there exists an active point $a \in \mathbb{S}^2$, almost surely.  
\end{lemma}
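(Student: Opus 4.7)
The plan is to extract an active point directly from any single infinite cluster, exploiting only compactness of~$\bbS^2$. In fact only the hypothesis $N_\infty \geq 1$ seems to be needed for this particular statement; the upper bound $N_\infty < \infty$ does not enter the argument below.

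On the full-probability event $\{N_\infty \geq 1\}$, I would select (measurably) some infinite connected component $C$ of $\{\sigma=1\}$. Since $V(C)$ is an infinite subset of the compact space $\bbS^2$, the set $\phi(V(C))$ admits at least one accumulation point $a\in\bbS^2$. I claim $a\in A$: any neighborhood $U$ of~$a$ contains infinitely many vertices of $C$, and since $C$ is an infinite connected subgraph each such vertex is incident to at least one edge of $C$, so $U$ meets infinitely many edges of $G$. Thus $a$ is an accumulation point of the edge set.

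To conclude that $a$ is active, pick any vertex $v\in V(C)$. Then the component $C$ of $\{\sigma=1\}$ witnesses $v\xlra{} a$ in the sense of the definition on page~\pageref{lem:activating-accum-pt}: its closure in $\bbS^2$ contains $v$ trivially, and contains $a$ by the defining property of an accumulation point of $\phi(V(C))\subset\phi(C)$. Hence $a$ is an active accumulation point.

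The only mild obstacle is measurability --- one must check that ``there exists an active point'' is a genuine event and that $C$ and $a$ can be selected measurably. But by the argument above, the existence of an active point is implied by (and in fact equivalent to) the existence of at least one infinite connected component, so the event in question reduces to $\{N_\infty\geq 1\}$, which is measurable and has probability one under the hypothesis; all remaining steps are purely deterministic given the selection of $C$.
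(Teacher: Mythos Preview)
Your argument is internally correct but proves a weaker statement than the one actually needed. You establish that \emph{on each realization} with $N_\infty\geq 1$ there is some (random, realization-dependent) active point. What the paper requires --- and what its proof delivers --- is a \emph{deterministic} point $a\in\bbS^2$, depending only on $G$ and the law of $\sigma$, such that the event ``$a$ is active'' has probability one. This is apparent from how the lemma is used: immediately afterwards the paper \emph{fixes} an active accumulation point $a$, builds the domains $\domain_n$ around it, and then invokes
\[
\P\bigl(\calS_n \xlra{\domain_n^c\cap\{\sigma=1\}} a\bigr) > 1-\eps,
\]
which is only meaningful for a fixed, non-random $a$. A random $a=a(\sigma)$ would not let one conclude~\eqref{eq:VeryHighProbability}.

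The red flag is that your argument uses neither tail triviality nor the hypothesis $N_\infty<\infty$. The paper remarks right after the lemma that the conclusion \emph{can fail} when $N_\infty=\infty$ (the $3$-regular tree embedded in the disk), and this counterexample is incompatible with your reading: by your own reasoning every infinite cluster has a limit point in $A$, so a random active point always exists once $N_\infty\geq 1$. In the paper's proof, tail triviality is used to select, level by level, a deterministic dyadic square $F_k\subset F_{k-1}$ on which the tail event $\bigcup_v\{v\xlra{}A\cap F_k\}$ has probability one; the bound $N_\infty<\infty$ is then used, via pigeonhole, to extract a single component whose closure meets $A\cap F_{k_\ell}$ along a subsequence, forcing the deterministic point $\bigcap_k F_k$ into that closure. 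Both ingredients are essential and are absent from your proposal.
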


\begin{proof}
	For each~$k\in \N$, draw the square lattice~$\L_k$ of mesh-size~$2^{-k}$ on~$\bbS^2$, where we take each face of the lattice to be a closed set. Assume $N_\infty(\sigma) \geq 1$. For each face $F$ of~$\L_k$, the union of~$\{v \xleftrightarrow{}  A \cap F\}$ over~$v\in V$ is a tail event; since~$\L_k$ has finitely many faces, there must exist at least one face such that $\{v \xleftrightarrow{}  A \cap F\}$ occurs for some~$v\in V$, almost surely. Repeating this for each k, we can produce a decreasing sequence of faces $F_k$ of lattices~$\L_k$, and a connected component of $\sigma$ which approaches some point of $A \cap F_k$. 
	Since $N_{\infty}(\sigma) < \infty$, we can find a single connected component $\mathcal{C}$ and some subsequence of faces $F_{k_\ell}$ such that $\mathcal{C}$ connects to $A \cap F_{k_\ell}$ for every $\ell$.
	Then, $\bigcap_\ell F_{k_\ell}$ is an active point.
\end{proof}

We now construct a sequence of closed sets $\{\domain_n\}$ exhausting~$G$ that, in the general case, will replace Euclidean balls used in Section~\ref{sec:sketch}; see Lemma~\ref{lem:jordan-domains} below for the exact list of properties. We also construct an associated sequence of vertex sets $\{\calS_n\}$ which will essentially function as cut sets (see Item 3 in Lemma~\ref{lem:jordan-domains}).
We start by fixing an active accumulation point~$a\in\bbS^2$ and a vertex $\rho$ of~$G$. 
Let $\Lambda_n(\rho)$ be the combinatorial ball about $\rho$ of radius $n$ --- that is, the graph induced by $\{v \in V(G): d_G(\rho,v) \leq n\}$, where $d_G(\cdot,\cdot)$ is the metric on $G$. Similarly, we define the boundary $\partial \Lambda_n(\rho) = \{v \in V(G): d_G(\rho,v) = n\}$. Now let $Q_n$ be the connected component of $a$ in $\bbS^2 \setminus \Lambda_n(\rho)$, and $\domain_n := \bbS^2 \setminus Q_n$.
Define~$J_n\subseteq \Lambda_n(\rho)$ as the topological boundary of $\domain_n$, and~$\calS_n := J_n \cap \partial \Lambda_n(\rho)$.

\begin{lemma}\label{lem:jordan-domains}
	The sequence of pairs of sets $\{\domain_n, \calS_n\}$ satisfies the following properties:
	\begin{enumerate}
	\item $\domain_n \subseteq \domain_{n+1}$, $\calS_n\cap \calS_{n+1} = \emptyset$, and $G \subseteq \bigcup_{n} \domain_n$,
 	\item $\domain_n$ is a closed, simply connected subset of~$\bbS^2$,  its boundary $J_n$ contains $\calS_n$ and can be continuously parametrized in such a way that no vertex of~$\calS_n$ is covered twice,
	\item for any vertex~$v \in \Omega_n$, any infinite connected component of $\{\sigma=1\}$ whose closure contains both $v$ and $a$ must contain a vertex in $\calS_n$.
\end{enumerate}
\end{lemma}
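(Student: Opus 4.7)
The plan is to verify the three parts of the lemma in order, using elementary planar topology together with the definitions $\Omega_n = \bbS^2 \setminus Q_n$ and $\mathcal{S}_n = J_n \cap \partial\Lambda_n(\rho)$. Property~(1) is essentially bookkeeping: since $\Lambda_n(\rho) \subseteq \Lambda_{n+1}(\rho)$, enlarging the removed set shrinks the connected component of $a$, so $Q_{n+1} \subseteq Q_n$, giving $\Omega_n \subseteq \Omega_{n+1}$; the disjointness $\mathcal{S}_n \cap \mathcal{S}_{n+1} = \varnothing$ holds because elements of $\mathcal{S}_k$ lie in $\partial \Lambda_k(\rho)$ and so have graph distance exactly $k$ from $\rho$; and every vertex $v$ satisfies $v \in \Lambda_{d_G(\rho,v)}(\rho) \subseteq \Omega_{d_G(\rho,v)}$ because $\Lambda_n \cap Q_n = \varnothing$ gives $\Lambda_n \subseteq \Omega_n$.

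For property~(2), $\Omega_n$ is closed since $Q_n$ is open (as a connected component of the open set $\bbS^2 \setminus \Lambda_n(\rho)$). Connectedness of $\Lambda_n(\rho)$ (any of its vertices is joined to $\rho$ by a geodesic contained in $\Lambda_n(\rho)$) combined with the standard fact that each face of a connected finite planar graph in $\bbS^2$ is simply connected implies that $Q_n$ is an open topological disk, so $\Omega_n$ is closed and simply connected. The inclusion $J_n \subseteq \Lambda_n(\rho)$ follows by a maximality argument: any $x \in J_n \setminus \Lambda_n(\rho)$ would admit a neighborhood in $\bbS^2$ disjoint from the compact set $\Lambda_n(\rho)$, hence by connectedness contained in the component $Q_n$, contradicting $x \notin Q_n$; together with the definition of $\mathcal{S}_n$, we obtain $\mathcal{S}_n \subseteq J_n$. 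A continuous parametrization of $J_n$ is then given by the boundary walk of the face $Q_n$ in $\Lambda_n(\rho)$. The main obstacle is showing that no $v \in \mathcal{S}_n$ is visited twice by this walk, i.e., that $v$ has exactly one wedge in $Q_n$. Here I would exploit that $v$ lies at graph distance exactly $n$: its $\Lambda_n(\rho)$-edges all go to neighbors at distance $\leq n$, while its outgoing $G$-edges (to distance-$(n+1)$ neighbors) lie inside wedges of the $\Lambda_n(\rho)$-embedding at $v$, and any wedge lying in $Q_n$ must furnish access from $v$ to $a$ through $\bbS^2 \setminus \Lambda_n(\rho)$. A careful planar-topological argument --- leveraging the hierarchical nesting of the well-separated embedding from Proposition~\ref{prop:embedding} --- should show these $Q_n$-facing wedges form a single contiguous sector; alternatively, one may locally perturb $\Omega_n$ near any offending vertex to enforce single visits without affecting the other properties.

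For property~(3), let $\mathcal{C}$ be an infinite connected component of $\{\sigma=1\}$ with $v \in V(\mathcal{C}) \cap \Omega_n$ and $a \in \overline{\mathcal{C}}$. Since $Q_n$ is an open neighborhood of $a$, the cluster $\mathcal{C}$ eventually enters $Q_n$; a brief case analysis using planarity (no edge of $G$ crosses $\Lambda_n(\rho)$) shows that any edge of $\mathcal{C}$ meeting $Q_n$ has at least one vertex endpoint in $Q_n$, which is then at graph distance $> n$ from $\rho$. Take a path $w_0 = v, w_1, \ldots, w_L$ in $\mathcal{C}$ reaching such a vertex, and set $k^\star := \min\{k : w_k \in Q_n\}$. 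If $w_{k^\star-1} \notin \Lambda_n(\rho)$, both endpoints of the edge $w_{k^\star-1} w_{k^\star}$ would lie in $\bbS^2 \setminus \Lambda_n(\rho)$; by planarity the edge would lie in a single face of $\Lambda_n(\rho)$, namely $Q_n$, contradicting $w_{k^\star-1} \in \Omega_n$. Thus $w_{k^\star-1} \in \Lambda_n(\rho)$, and combining $d_G(\rho, w_{k^\star-1}) \leq n$ with $d_G(\rho, w_{k^\star-1}) \geq d_G(\rho, w_{k^\star}) - 1 > n-1$ forces $d_G(\rho, w_{k^\star-1}) = n$. Since the edge $w_{k^\star-1} w_{k^\star}$ opens into $Q_n$ at $w_{k^\star-1}$, we also obtain $w_{k^\star-1} \in \partial Q_n = J_n$, yielding $w_{k^\star-1} \in \mathcal{S}_n \cap V(\mathcal{C})$.
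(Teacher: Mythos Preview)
Your arguments for Items~1 and~3 are correct and essentially match the paper's proof (your Item~3 is more carefully spelled out than the paper's, which compresses the same reasoning into a few lines).

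The gap is in Item~2, at the step where you need to show that no vertex of $\calS_n$ is visited twice by the boundary walk of $Q_n$. You describe the setup correctly (wedges at $v$, edges to $\Lambda_n$ versus edges leaving $\Lambda_n$), but then you write that a careful argument ``should show'' the $Q_n$-facing wedges form a single sector, and you appeal to the well-separated embedding and to an unspecified local perturbation as fallbacks. Neither of these is an argument. The well-separated property only concerns accumulation points and is irrelevant here; and perturbing $\Omega_n$ is not available to you, since the lemma is about the specific sets $\Omega_n$, $\calS_n$ as defined.

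The missing idea, which the paper supplies, is purely combinatorial: if a vertex $v$ is visited twice by the boundary walk of the face $Q_n$, then one can build a Jordan curve through $v$ and the dual vertex $q_n^*$ corresponding to $Q_n$ that separates the two visits, and this forces $v$ to be a cut-vertex of $\Lambda_n(\rho)$. But no vertex of $\partial\Lambda_n(\rho)$ can be a cut-vertex: for any $u \in \Lambda_n(\rho)\setminus\{v\}$, a geodesic from $u$ to $\rho$ has length at most $n$ and therefore cannot pass through $v$ (which sits at distance exactly $n$), so $\Lambda_n(\rho)\setminus\{v\}$ remains connected. Hence $v\notin\calS_n$. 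Your wedge picture is consistent with this---two non-contiguous $Q_n$-wedges at $v$ are exactly what make $v$ a cut-vertex---but you need the cut-vertex observation to close the argument.
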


\begin{proof}
	For Item~$1$, it is clear that $\Lambda_n(\rho) \subseteq \Lambda_{n+1}(\rho)$.
	This implies $Q_n \supseteq Q_{n+1}$ and~$\domain_{n}\subseteq \domain_{n+1}$ as requested. 
	Also, $\calS_n\subseteq \partial \Lambda_n(\rho)$ and~$\partial \Lambda_n(\rho) \cap \partial \Lambda_{n+1}(\rho) = \emptyset$, whence~$\calS_n\cap \calS_{n+1} = \emptyset$.
	Since $G$ is connected, the combinatorial balls~$\Lambda_n(\rho)\subseteq \domain_n$ exhaust $G$, whence~$G \subseteq \bigcup_n \domain_n$.
	
	For Item~$2$, note that $Q_n$ is open and path connected, whence $\domain_n$ is closed and simply connected. 
	Their common boundary~$J_n$ is the union of all (finitely many) edges that border~$Q_n$.
	By picking a starting point on $J_n$ and traversing this boundary in a clockwise manner, one obtains acontinuous parametrization $\gamma_n: \bbS^1 \to J_n$.  Given distinct~$x,y\in \bbS^1$, define~$(x,y)$ as the clockwise arc excluding the endpoints. If $\gamma_n(x)=\gamma_n(y)=v\in V$, then both~$\gamma_n((x,y))\cap V$ and~$\gamma_n((y,x))\cap V$ are nonempty (since~$G$ is simple).
	Now, define~$\Lambda_n(\rho)^*$ as the dual graph of~$\Lambda_n(\rho)$ with a vertex~$q_n^*$ corresponding to~$Q_n$.
	If the first and last edges in~$\gamma_n((x,y))$ coincide, then define~$\calC$ as the self-loop in~$\Lambda_n(\rho)^*$  at~$q_n^*$ crossing this edge.  
	Otherwise, denote these edges by~$e_1,e_2$, their duals by~$e_1^*,e_2^*$ and construct~$\calC$ as follows:  start at~$q_n^*$, trace a path along $e_1^*$ and $e_1$ to $v$, then along $e_2$ and $e_2^*$ back to~$q_n^*$.
	In either case, $\calC$ is a Jordan curve that separates~$\gamma_n((x,y))$ from~$\gamma_n((y,x))$ in~$\Lambda_n(\rho) \setminus \{v\}$.
	Thus, $v$ is a cut-vertex of~$\Lambda_n(\rho)$.
	In particular, $v\not\in \calS_n\subset \partial\Lambda_n(\rho)$ since no vertex in~$\partial\Lambda_n(\rho)$ can be a cut-vertex.
	
	For Item~$3$, we can find~$\varepsilon>0$ such that ball of radius~$\varepsilon$ around~$a$ does not intersect~$\domain_n$.
	By our definition of connectivity, there exists a vertex~$u$ in this ball that is connected to~$v$ in~$\{\sigma=1\}$.
	Follow this path from~$v$ to~$u$ and let~$w$ be the last vertex on the path that is contained in~$\domain_n$.
	Then, $w\in J_n$, and hence~$w\in \calS_n$.
\end{proof}

As a corollary of Lemma~\ref{lem:jordan-domains}, we see that
\[
\bigcup_{n} \{\calS_n  \xleftrightarrow{\domain_n^c \; \cap\; \{\sigma=1\}} a\} = \{a \text{ is an active point of } \sigma \}.
\]
Since the sequence of events is increasing, for any $\eps >0$ and $n$ large enough, we have

\begin{equation}\label{eq:VeryHighProbability}
\mathbb{P}\left(\calS_n   \xleftrightarrow{\domain_n^c \; \cap \; \{\sigma=1\}} a \right) > 1- \eps.
\end{equation}
 Using a parametrization from Lemma~\ref{lem:jordan-domains}, we denote vertices of~$\calS_n $ by~$v_1^n$, \dots, $v_L^n$ in clockwise order.
Given $1 \leq i_1 < \dots < i_{2k} \leq L$, we can partition $\calS_n$ into $2k$ arcs, setting $\mathrm{Arc}_j := \{v_{i_{j-1} +1}, \dots, v_{i_{j}}\}$ (where $i_0 := i_{2k}$).
Define the following arm event:
\[
\mathrm{Arm}_{n}^{2k}(i_1, \dots, i_{2k}) := \bigcap_{j=1}^{2k} \{\mathrm{Arc}_j\xleftrightarrow{\domain_n^c \; \cap \; \{\sigma=1\}} a\} \cap \{\mathrm{Arc}_j \xleftrightarrow{\domain_n^c \; \cap \; \{\sigma=0\}} a\}
\]   
The event is symmetric with respect to replacing~$\sigma$ by~$1-\sigma$ and implies the existence of~$2k$ alternating open and closed crossings; see Fig.~\ref{fig:catalan}.

\begin{lemma}\label{lem:crossings}
	Let~$\varepsilon>0$ and~$k\in \N$. Let $\sigma$ be a tail trivial site percolation that is stochastically dominated by $1 - \sigma$. Then, for~$n\in\N$ large enough, there exist~$1\leq i_1<\dots<i_{2k}\leq L$ such that,
	\[
		\P(\sigma\in \mathrm{Arm}_n^{2k}(i_1,\dots,i_{2k})) > 1-\varepsilon.
	\]

\end{lemma}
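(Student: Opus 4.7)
\medskip

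\noindent\textbf{Plan.} I will iteratively split the arc $\calS_n$ into $2k$ sub-arcs, each connecting to $a$ through open vertices with probability close to one, and then promote this to simultaneous open and closed connections via the stochastic domination hypothesis. For $1 \leq i \leq j \leq L$, set
\[
p_{i,j} := \P\bigl(\{v_i^n,\dots,v_j^n\} \nxlra{\domain_n^c \cap \{\sigma=1\}} a\bigr).
\]
Three properties of $p_{i,j}$ will drive the argument: (i) the event is decreasing in $\sigma$, so positive association yields the FKG-type bound $p_{i,m} \cdot p_{m+1,j} \leq p_{i,j}$ for $i \leq m < j$; (ii) $p_{i,j}$ is monotone in sub-arc inclusion, since enlarging the starting set only helps reach $a$; (iii) stochastic domination of $\sigma$ by $1-\sigma$ forces $p_{m,m} \geq \P(\sigma_{v_m^n} = 0) \geq 1/2$.

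\medskip

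\noindent\textbf{Splitting step.} Given an arc $\{v_i,\dots,v_j\}$ with $p_{i,j} \leq \delta$ and $\delta < 1/8$, I claim there exists $i^* \in (i,j)$ such that both sub-arcs $\{v_i,\dots,v_{i^*}\}$ and $\{v_{i^*+1},\dots,v_j\}$ have failure probability at most $\sqrt{2\delta}$. Take
\[
i^* := \min\{m \in [i,j] : p_{i,m} \leq \sqrt{2\delta}\},
\]
which is well-defined because $p_{i,j} \leq \sqrt{2\delta}$. Property~(iii) forces $i^* > i$; minimality of $i^*$ gives $p_{i, i^*-1} > \sqrt{2\delta}$, and (i) then yields $p_{i^*,j} \leq \delta/\sqrt{2\delta} = \sqrt{\delta/2} < 1/2$. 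If $i^* = j$ then $p_{i^*,j} = p_{j,j} \geq 1/2$ by (iii), a contradiction; hence $i^* < j$, and a second application of (i) with $p_{i^*,i^*} \geq 1/2$ gives $p_{i^*+1,j} \leq 2\sqrt{\delta/2} = \sqrt{2\delta}$. The same $1/2$-lower bound prevents either sub-arc from reducing to a single vertex, so the step can be iterated on each sub-arc.

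\medskip

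\noindent\textbf{Iteration, closed connections, and the main obstacle.} Starting from $p_{1,L} \leq \delta_0$ (guaranteed by \eqref{eq:VeryHighProbability} for $n$ large) and applying the splitting step $T := \lceil \log_2(2k) \rceil$ times in a balanced fashion, I obtain at least $2k$ sub-arcs, each with open-failure probability at most $\delta_T$, where $\delta_{t+1} = \sqrt{2\delta_t}$. An easy induction gives $\delta_T = 2^{1-2^{-T}} \delta_0^{2^{-T}}$, which can be made arbitrarily small by choosing $\delta_0$ small, i.e.\ $n$ large. The event that a given arc reaches $a$ through \emph{closed} vertices is a decreasing function of $\sigma$, hence an increasing function of $1-\sigma$; by the stochastic domination of $\sigma$ by $1-\sigma$, its probability is at least the corresponding open-connection probability, and therefore also at least $1 - \delta_T$. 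A union bound over the $4k$ events composing $\mathrm{Arm}_n^{2k}(i_1,\dots,i_{2k})$ then gives probability at least $1 - 4k\delta_T$, which exceeds $1-\varepsilon$ once $\delta_0$ is small enough. The main delicate point is ensuring $|\calS_n|$ is large enough to carry out $T$ rounds of splitting into sub-arcs of size at least two; this is guaranteed for $n$ sufficiently large because $a$ is an active accumulation point which must be separated from $\rho$ by $\calS_n$ via Lemma~\ref{lem:jordan-domains}.
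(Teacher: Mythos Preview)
Your proof is correct and follows the same core idea as the paper: use positive association together with the bound $p_{m,m}\ge 1/2$ (from stochastic domination) to iteratively split $\calS_n$ into $2k$ arcs each connecting to $a$ with high probability, then invoke domination once more for the closed connections and finish with a union bound. The only organizational difference is that you run a binary-tree recursion (halving arcs, with $\delta_{t+1}=\sqrt{2\delta_t}$), whereas the paper peels off arcs one at a time from the left: with $\eps'=(\eps/8k)^{2k}$ and $p_{1,L}<\eps'$, it takes $i_1=\min\{i:p_{1,i}\le \eps/4k\}$, shows $p_{i_1+1,L}\le(\eps/8k)^{2k-1}$, and iterates. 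Both schemes yield the same conclusion with comparable constants.

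One small remark on your final paragraph: the justification that $|\calS_n|$ is large ``because $a$ is an active accumulation point which must be separated from $\rho$ by $\calS_n$'' is not the right reason---separation alone does not force $\calS_n$ to have many vertices. The correct reason is already contained in your own argument: since every singleton has $p_{m,m}\ge 1/2$, an arc with failure probability below $1/8$ cannot have fewer than three vertices (by your FKG bound $p_{i,m}p_{m+1,j}\le p_{i,j}$), so as long as all $\delta_t<1/8$ the sub-arcs automatically remain large enough to split again. This is a cosmetic point and does not affect the validity of the proof.
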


\begin{proof}
	Fix $\eps' = (\eps/8k)^{2k}$. We follow the argument detailed in Section~\ref{sec:sketch}. For each~$i,j\in \{1,\dots,L\}$, let~$p_{i,j}$ denote the probability that~$\{v_i,\dots,v_j\}$ (in clockwise order) does {\em not} connect to~$a$ in~$\domain_n^c$.
	Again, $p_{1,i}$ is decreasing and~$p_{1,L}< \eps'$, thanks to~\eqref{eq:VeryHighProbability}.
	Define
	\[
		i_1:=\min\{i\in \{1,\dots,L\}\colon p_{1,i} \leq \eps/4k\}.
	\]
	As in Section~\ref{sec:sketch}, by the FKG inequality and the domination of~$\sigma$ by~$1-\sigma$, we get
	\[
		p_{i_1+1,L} \leq p_{1,L}/p_{1,i_1} \leq (\eps/8k)^{2k-1}.
	\]
	Iterating this~$2k-1$ times, we get~$1<i_1<\dots<i_{2k}=L$ such that $p_{i_{j-1},i_j} \leq \eps/4k$ for each~$j=1,\dots, 2k$.
	By the stochastic domination, the same holds also for crossings of closed vertices.
	The lower bound on the probability of $\mathrm{Arm}_n^{2k}(i_1,\dots,i_{2k})$ then follows applying the union bound to the complement of the event.
\end{proof}

\begin{lemma}\label{lem:catalan}
	Let~$n,k\in \N$, $1\leq i_1<\dots<i_{2k}\leq L$  and~$\sigma,\tau\in \mathrm{Arm}_n^{2k}(i_1,\dots,i_{2k})$ such that~$\sigma\leq \tau$.
	Then, $N_\infty(\sigma) + N_\infty(1-\tau) \geq k+1$.
\end{lemma}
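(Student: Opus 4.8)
The plan is to extract, from the event $\mathrm{Arm}_n^{2k}(i_1,\dots,i_{2k})$, a collection of $2k$ disjoint infinite connected components --- $k$ of them open (in $\sigma$) and $k$ of them closed (in $1-\tau$) --- and then argue by planar topology that these $2k$ "arms", arranged alternately around the cut set $\calS_n$, can merge into at most $k-1$ total components, so that at least $k+1$ remain. First I would fix attention on a single arc $\mathrm{Arc}_j$. On the event in question there is an infinite open cluster $\calC_j^{\mathrm{o}}$ whose closure meets $\mathrm{Arc}_j$ and $a$, and an infinite closed cluster $\calC_j^{\mathrm{c}}$ with the same property (using $\sigma \le \tau$: the closed cluster of $\mathrm{Arc}_j$ under $\tau$ is contained in the closed cluster under $\sigma$, but what we actually want is that $1-\tau$ has at least as many open sites as needed --- here one should be careful: $\{1-\tau=1\}=\{\tau=0\}\subseteq\{\sigma=0\}$, so a closed crossing witnessed for $1-\sigma$ need not persist for $1-\tau$; instead one runs the whole $\mathrm{Arm}$ construction of Lemma~\ref{lem:crossings} directly for the pair $(\sigma,1-\tau)$, which is legitimate since $\sigma$ and $1-\tau$ are both dominated by their complements and $\sigma\le\tau$ means $\{1-\tau=1\}$ sits below $\{1-\sigma=1\}$ --- so I would instead assume $\mathrm{Arm}_n^{2k}$ is defined with $1-\tau$ in place of the generic complement, matching how Lemma~\ref{lem:catalan} will be invoked in the main proof).

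Next I would record the topological input. All the clusters in question live in $\domain_n^c = Q_n$, which is an open topologically disk (Item~2 of Lemma~\ref{lem:jordan-domains}), with $a$ an interior point and the vertices $v_1^n,\dots,v_L^n$ of $\calS_n$ appearing in this cyclic order on its boundary $J_n$ (Item~2 gives a parametrization not covering any $\calS_n$-vertex twice). For each $j$, pick a simple arc $\alpha_j$ inside $\calC_j^{\mathrm{o}}\cup\{a\}$ from a point of $\mathrm{Arc}_j$ on $\partial Q_n$ to $a$, and a simple arc $\beta_j$ inside $\calC_j^{\mathrm{c}}\cup\{a\}$ likewise. These $4k$ "spokes" emanate from $a$; their cyclic order around $a$ can differ from the order of their endpoints on $\partial Q_n$, but crucially, an open spoke $\alpha_j$ and a closed spoke $\beta_{j'}$ cannot cross (open and closed vertices are disjoint, and interiors of the arcs stay in the respective clusters away from $a$). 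The heart of the argument is the planarity/Jordan-curve bookkeeping: the $k$ open clusters, restricted to $Q_n$, are pairwise either equal or disjoint; ditto the $k$ closed ones; and a closed arc $\beta_{j'}$ cannot cross an open arc $\alpha_j$, nor can $\beta_{j'}$ jump between two distinct open clusters that separate it. Formalizing this: consider the open clusters as a set partition of $\{1,\dots,2k\}$ coarsening the arcs they touch. Two arcs $\mathrm{Arc}_j, \mathrm{Arc}_{j''}$ that are connected through an open cluster bound, together with a path through that cluster and a boundary arc of $Q_n$, a Jordan curve; any closed cluster meeting an arc strictly between them cannot escape to $a$ without crossing --- unless it equals a closed cluster also trapped on the same side, which forces a nesting (Catalan / non-crossing partition) structure. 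The count then is the standard one: a non-crossing pairing-type constraint on $2k$ alternating colored arms forces the number of distinct clusters to be at least $k+1$ (one more component than the number of "nested pairs").

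The main obstacle, as I see it, is making the last topological step fully rigorous in the setting where $Q_n$ is a general Jordan domain and the clusters $\calC_j$ are arbitrary connected (not necessarily locally connected or locally finite) subsets of $\bbS^2$ accumulating at $a$. I would handle this by working entirely with the chosen simple arcs $\alpha_j,\beta_j$ rather than the clusters themselves: contract $a$ and realize the $2k$ arcs (one per arc of $\calS_n$, say the open one $\alpha_j$) together with the boundary $\partial Q_n$ as a plane graph, a "cactus" of Jordan loops based at $a$; the complement has faces, and each face boundary sees a contiguous block of the cyclic sequence of arc-labels. A closed spoke $\beta_{j'}$ lives in one such face and connects $a$ to a boundary point in arc $j'$; this constrains which closed spokes can coincide (belong to one closed cluster) --- exactly the non-crossing condition. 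Counting distinct open clusters $=k-p$ and distinct closed clusters where $p$ is the number of "merges", and bounding the closed merges by the open nesting depth, yields $N_\infty(\sigma)+N_\infty(1-\tau)\ge (k-p)+(p+1)=k+1$. I would present this via a clean induction on $k$: if some arc's open cluster is distinct from all others, remove that arc and its closed partner and apply the inductive hypothesis to $2(k-1)$ arms; otherwise every open cluster is shared, forcing a closed cluster to be "innermost" and similarly removable --- the base case $k=1$ being immediate since two distinct components (one open, one closed arm) are always present.
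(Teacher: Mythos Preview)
Your high-level picture is right: open-in-$\sigma$ arms and closed-in-$\tau$ arms are vertex-disjoint (this is exactly where $\sigma\le\tau$ enters, since $\{\tau=0\}\cap\{\sigma=1\}=\emptyset$), so planarity forces a non-crossing structure and an inductive count gives $k+1$. The paper's proof follows the same outline. Two points, however, deserve correction or sharpening.

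First, your parenthetical worry about closed crossings is a misreading of the hypotheses. The lemma assumes $\tau\in\mathrm{Arm}_n^{2k}$ \emph{as a configuration in its own right}; by definition of $\mathrm{Arm}_n^{2k}$ this already says that every arc connects to $a$ through $\{\tau=0\}$ in $\Omega_n^c$. There is no need to redefine the event or to rerun Lemma~\ref{lem:crossings} for a hybrid pair: the closed-in-$\tau$ arms are handed to you directly.

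Second, and more seriously, you correctly identify the main obstacle---the clusters only accumulate at $a$, and there is no reason a simple arc $\alpha_j\subset\calC_j^{\mathrm o}\cup\{a\}$ from $\partial Q_n$ to $a$ should exist---but you do not resolve it. Your proposed workaround (build a planar ``cactus'' from the $\alpha_j$'s and argue with faces) presupposes exactly the arcs whose existence is in doubt. The paper sidesteps this cleanly: rather than reasoning at $a$, it truncates to a larger finite domain $\Omega_m$, replaces ``connects to $a$'' by ``crosses the annulus $\Omega_m\setminus\Omega_n$ from $\calS_n$ to $\calS_m$'', and proves $C_{n,m}(\sigma)+C_{n,m}(1-\tau)\ge k+1$ for every $m>n$. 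In finite volume all paths are honest finite graph paths, Jordan-curve arguments are unproblematic, and the induction (on $k$, splitting $\Omega_m$ along a concatenation $\gamma_1\cup\gamma_{1i}\cup\gamma_i$ whenever two open arms $b_1,b_i$ lie in the same $\sigma$-cluster) goes through without topological subtleties. This finite truncation is the missing idea in your proposal; once you have it, your induction sketch can be made to work, though the paper's version---split when two open arms merge, rather than peel off an isolated one---is somewhat more direct.
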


\begin{figure}[t]
\begin{center}
\begin{subfigure}[]{0.3 \textwidth}
\includegraphics[scale=0.52,page=1]{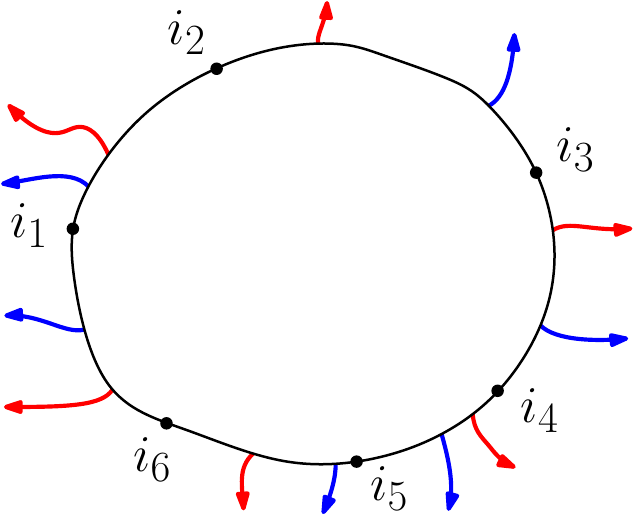} \end{subfigure} {\color{white} ce ce ce ce} 
\begin{subfigure}[]{0.3 \textwidth}
\includegraphics[scale=0.52,page = 2]{catalan2.pdf}  
\end{subfigure}
\end{center}
 \caption{\textsc{Left}: a configuration in $\mathrm{Arm}_n^{2k}(i_1, \dots, i_{2k})$ for~$k=3$ with open (blue) and closed (red) infinite paths. \textsc{Right}: a coloring of the interior of the domain. As shown by the green circles, planar topology restricts the possible connectivity of the infinite paths for any coloring. In the illustrated example, there are five open and closed infinite components in total, which is greater than~$k+1=4$.}
\label{fig:catalan}
\end{figure}

\begin{proof} 
	For~$m>n$, define~$C_{n,m}(\sigma)$ (resp.~$C_{n,m}(1-\tau)$) as the number of connected components of $\domain_m \cap \{\sigma=1\}$ (resp.~$\domain_m \cap \{\tau=0\}$) which contain a crossing from $\calS_n$ to $\calS_m$ in the annular domain $\domain_m \setminus \domain_n$.
	It is enough to show that~$C_{n,m}(\sigma) + C_{n,m}(1-\tau) \geq k+1$ for each~$m>n$.
	We prove this by induction on $k$.
	
	By the arm event, for~$j=1,\dots, k$, we can find~$b_j\in \mathrm{Arc}_{2j-1},w_j\in \mathrm{Arc}_{2j}$ such that~$\sigma$ (resp.~$\tau$) contains an open (resp. closed) crossing in~$\domain_m \setminus \domain_n$ that starts at~$b_j$ (resp.~$w_j$).
	If all the points~$b_1,\dots,b_k$ belong to distinct open connected components of~$\sigma$ in~$\domain_m$ (which is the case always when~$k=1$), then the statement is trivial since we always have at least one connected component of closed vertices in~$\tau$ that contains a crossing.
	
	Without loss of generality, assume that~$k\geq 2$ and that~$b_1$ is connected to~$b_i$ in~$\domain_m\cap \{\sigma=1\}$.
	Let~$\gamma_1$ and~$\gamma_i$ be open paths in~$\sigma$ linking~$b_1$ and~$b_i$ to~$\calS_m$ in~$\domain_m\setminus \domain_n$.
	Since~$\sigma\leq \tau$, both paths~$\gamma_1$ and~$\gamma_i$ are also open in~$\tau$.
	Due to the existence of closed crossings in~$\tau$ starting at~$w_1$ and at~$w_i$, the paths~$\gamma_1$ and~$\gamma_i$ are disjoint by the planar duality.
	Let~$\gamma_{1i}$ be an open path in~$\sigma$ in~$\domain_m$ connecting~$\gamma_1$ to~$\gamma_i$ and disjoint from them, besides its endpoints.
	Again, since~$\sigma\leq \tau$, the path~$\gamma_{1i}$ is also open in~$\tau$.
	Then, in~$\domain_m$,
	\begin{itemize}
		\item there is no closed crossing in~$\tau$ from~$(w_1,\dots,w_{i-1})$ to~$(w_{i+1},\dots,w_k)$;
		\item any open connected component of~$\sigma$ intersecting both~$(b_2,\dots,b_{i-1})$ and~$(b_{i+1},\dots,b_k)$ contains~$b_1$.
	\end{itemize}
	Now, we apply the induction hypothesis to the restrictions of~$\sigma$ and~$\tau$ to the parts in which~$\gamma_1\cup \gamma_{1i}\cup\gamma_i\cup \partial \domain_m$ splits~$\domain_m$.
	Adding the numbers of clusters in different parts and subtracting one common cluster containing~$b_1$, we get~$C_{n,m}(\sigma) + C_{n,m}(1-\tau) \geq k+1$.
\end{proof}

We are now ready to finish the proof of Theorem~\ref{thm:perco}.

\begin{proof}[Proof of Theorem~\ref{thm:perco}]
	Consider a well-separated embedding of~$G$ in~$\bbS^2$ provided by Proposition~\ref{prop:embedding}.
	By tail triviality, it is enough to exclude that~$1 \leq N_{\infty}(\{\sigma=1\}) < \infty$, a.s.
	Since~$\sigma$ is stochastically dominated by~$1-\sigma$, Strassen's theorem gives a coupling between~$\sigma$ and a random variable~$\tau$ with the same law as~$1-\sigma$ such that~$\sigma\leq \tau$ a.s. 
	Picking $\varepsilon < 1/2$, we can apply Lemmata~\ref{lem:jordan-domains} and~\ref{lem:crossings} to find a domain~$\domain_n$ and~$1\leq i_1<\dots<i_{2k}\leq L(n)$ such that~$\sigma \in \mathrm{Arm}_n^{2k}(i_1,\dots,i_{2k})$ with probability at least~$1-\varepsilon$.
	Since~$\mathrm{Arm}_n^{2k}(i_1,\dots,i_{2k})$ is symmetric in $\sigma$ and $1 - \sigma$, we get that $\tau \in \mathrm{Arm}_n^{2k}(i_1,\dots,i_{2k})$ with probability at least $1 - \varepsilon$, as well; see Fig.~\ref{fig:catalan}.
	By the union bound,
	\[
		\P(\sigma, \tau\in \mathrm{Arm}_n^{2k}(i_1,\dots,i_{2k})) \geq 1-2\varepsilon.
	\]
	On this event, by Lemma~\ref{lem:catalan}, we get that~$N_\infty(\sigma) + N_\infty(1-\tau) \geq k+1$.
	Thus,
	\[
		\P(N_\infty(\sigma) + N_\infty(1-\tau) \geq k+1) \geq 1-2\varepsilon.
	\]
 Since~$1-\tau$ has the same law as~$\sigma$, we get that $N_\infty(\sigma)\geq (k+1)/2$ with probability at least~$1/2-\varepsilon$ for any~$k$. This means that~$N_\infty(\sigma)=\infty$ with positive probability, producing the desired contradiction.
\end{proof}

\section{Proof of Corollaries~\ref{cor:p-c} and~\ref{cor:divide-color}}
\label{section:divide-color}

Both Corollaries~\ref{cor:p-c} and~\ref{cor:divide-color} follow from Theorem~\ref{thm:perco} if one can rule out the existence of infinitely many infinite connected components.
A beautiful argument of Burton and Keane~\cite{BurKea89} treats a large class of bond percolations on transitive amenable graphs.
Here we adapt it to the setting of site percolations on unimodular invariantly amenable graphs.
Note that a site percolation might not have any trifurcation point due to combinatorial constraints (e.g. the Kagome lattice).
In order to resolve this issue, we sample an auxiliary uniform spanning forest~$\tau$ on top of our site percolation~$\sigma$ and show that~$\tau$ does have trifurcation points (this is reminiscent to~\cite[Lemma~$7.7$]{LyoPer17}).
We then extend the argument to unimodular graphs using~\eqref{eq:MTP} in a fairly standard way.
We provide all details of the proof of Corollary~\ref{cor:p-c} and then explain how to modify it to the more general setting of Corollary~\ref{cor:divide-color}.

\begin{proof}[Proof of Corollary~\ref{cor:p-c}]
Let $\sigma$ be Bernoulli site percolation of parameter $p \leq 1/2$. The statement is trivial at $p=0$, so we will assume $p >0$. 
Assume also that $N_\infty(\{\sigma=1\}) = \infty$, almost surely, to get a contradiction.

Denote by~$\mathsf{Ber}_G$ the law of~$\sigma$ sampled from~$(G,\rho)$.
By the above, $\mathsf{Ber}_G(N_\infty(\{\sigma=1\}) = \infty)=1$, for almost every realisation of~$(G,\rho)$.
We fix any such realisation of~$(G,\rho)$.
For~$r \in \mathbb{N}$ and a vertex~$v$ of~$G$, let~$\Lambda_r(v)$ be the set of vertices in $G$ at (combinatorial) distance at most $r$ from $v$.
Define $\mathrm{TriComp}_r$ to be the event that three distinct infinite open clusters in~$\Lambda_{r-1}(\rho)^c$ intersect $\partial \Lambda_r(\rho)$.
By the continuity of measure, $\mathsf{Ber}_G(\mathrm{TriComp}_r)>1/2$ for~$r$ large enough.
Fix such~$r$.
Using independence, we get
\begin{equation}\label{eq:three-clust-sigma}
	\mathsf{Ber}_G(\mathrm{TriComp}_r \cap \{\sigma \equiv 1 \text{ on } \Lambda_r(\rho) \}) \geq \tfrac12\cdot p^{|\Lambda_r(\rho)|}.
\end{equation}
Fix any realization of~$\sigma\in\mathrm{TriComp}_r$ which is open on the entire ball~$\Lambda_r(\rho)$.

For~$n\in \N$, consider the subgraph of $G$ induced by $\{\sigma = 1\}\cap \Lambda_n(\rho)$ and denote its connected component containing~$\rho$ by~$\calC_{n,\rho}$.
Define~$\mathsf{UST}_{n,G,\sigma}$ as the uniform spanning tree measure on~$\calC_{n,\rho}$.
It is standard that the UST satisfies a negative correlation inequality, whence the limit of~$\mathsf{UST}_{n,G,\sigma}$  as~$n$ tends to infinity exists; see~\cite{BenLyoPerSch01}.
The limit is called {\em free uniform spanning forest}, we denote it by~$\mathsf{FUSF}_{G,\sigma}$. Below we do not rely on~\cite[Theorem~$5.13$]{AngHutNacRay18} establishing that the FUSF is connected a.s. since that work assumes that~$\rho$ has a finite expected degree.

Denote a sample from~$\mathsf{FUSF}_{G,\sigma}$ by~$\tau$.
For a vertex~$v\in V$, let $\mathrm{Tri}(v)$ be the event that $v$ is a {\em trifurcation} point for~$\tau$: the cluster~$K_{\tau}(v)$ is infinite, and removing $v$ and all its incident edges from $\tau$ splits $K_{\tau}(v)$ into at least three infinite components.
We claim that
\begin{equation}\label{eq:trifurc-exists}
	\mathsf{FUSF}_{G,\sigma}(\exists v\in \Lambda_r(\rho) \colon \tau\in \mathrm{Tri}(v)) \geq 2^{-|E(\Lambda_r(\rho))|-2} =: \varepsilon >0.
\end{equation}
We start by reducing this to a statement about the UST in finite volume.
For~$s>r$ and $v\in \Lambda_r(\rho)$, let $\mathrm{Tri}_s(v)$ be the event that~$v$ is an {\em $s$-almost trifurcation} point for~$\tau$: $K_{\tau}(v)$ contains a crossing from~$\Lambda_r(\rho)$ to~$\partial\Lambda_s(\rho)$, and removing~$v$ and all its incident edges from~$\tau$, splits~$K_{\tau}(v)$ into several components, at least three of which contain crossings~$\Lambda_r(\rho)$ to~$\partial\Lambda_s(\rho)$.
Clearly, $\mathrm{Tri}_s(v)$ is measurable with respect to edges in~$\Lambda_s(\rho)$.
By the continuity of measure, for~$s$ large enough, if~$v\in \Lambda_r(\rho)$ is an~$s$-almost trifurcation, then it is a true trifurcation with probability at least~$1-\varepsilon$.
Fix any such~$s>r$ and~$N>s$ such that the restrictions of~$\mathsf{UST}_{N,G,\sigma}$ and~$\mathsf{FUSF}_{G,\sigma}$ to~$\calC_{s,\rho}$ are~$\varepsilon$-close in the total variation distance.
Thus, it is enough to show
\begin{equation}\label{eq:almost-trifurc-exists}
	\mathsf{UST}_{N,G,\sigma}(\exists v\in \Lambda_r(\rho) \colon \tau\in \mathrm{Tri}_s(v)) \geq 4\varepsilon.
\end{equation}
We now fix any realization~$\tau_\mathrm{out}$ of~$\mathsf{UST}_{N,G,\sigma}$ restricted to~$E(\Lambda_r(\rho))^c$.
Define~$W$ as the subgraph of~$G$ induced by a subset of vertices of~$\Lambda_r(\rho)$ that contains exactly one vertex in each connected component of~$\tau_\mathrm{out}$ that intersects~$\Lambda_r(\rho)$.
In particular, $W$ contains $\Lambda_{r-1}(\rho)$ and, thus, is connected.
Let~$\tau_\mathrm{in}$ be an arbitrary spanning tree of~$W$.
Then, $\tau_\mathrm{out}\cup \tau_\mathrm{in}$ is a spanning tree of~$\calC_{N,\rho}$.
We now show that~$\tau_\mathrm{out}\cup \tau_\mathrm{in}$ contains an $s$-almost trifurcation point in~$\Lambda_r(\rho)$.
This would readily imply~\eqref{eq:almost-trifurc-exists}, since, by the Markov property, $\mathsf{UST}_{N,G,\sigma}$ conditioned on~$\tau_\mathrm{out}$ and restricted to~$\Lambda_r(\rho)$ is uniform over fitting subsets of edges of~$\Lambda_r(\rho)$, and there are at most~$2^{|E(\Lambda_r(\rho))|} = (4\varepsilon)^{-1}$ such subsets.
Recall that~$\sigma\in \mathrm{TriComp}_r$.
Thus, $W$ must contain three vertices of~$\partial\Lambda_r(\rho)$ belonging to three disconnected crossings in~$\tau_\mathrm{out}$ from~$\Lambda_r(\rho)$ to~$\partial\Lambda_s(\rho)$.
Denote these vertices by~$w_1,w_2,w_3$ and consider the unique paths in~$\tau_\mathrm{in}$ from~$w_3$ to~$w_1$ and~$w_2$.
The last common vertex of these paths is a trifurcation and belongs to~$\Lambda_r(\rho)$.
This finishes the proof of~\eqref{eq:almost-trifurc-exists} and hence~\eqref{eq:trifurc-exists}.

Define 
\[
	f(G,\rho,v) = \mathsf{Ber}_G(\mathsf{FUSF}_{G,\sigma}(\mathrm{Tri}(\rho)))\cdot \1_{v \in \Lambda_r(\rho)}\cdot |\Lambda_r(\rho)|^{-1}.
\]
Applying~\eqref{eq:MTP} and using \eqref{eq:three-clust-sigma} and~\eqref{eq:trifurc-exists} we get that
\begin{align}
	\mathbb{E}[ \mathsf{Ber}_G( \mathsf{FUSF}_{G,\sigma} (\mathrm{Tri}(\rho) ) )] 
	&= \mathbb{E} \bigg[\sum_{v \in \Lambda_r(\rho) } \mathsf{Ber}_G( \mathsf{FUSF}_{G,\sigma} ( \mathrm{Tri}(v) ) )\cdot |\Lambda_r(v)|^{-1}\bigg] \nonumber  \\
	&\geq \mathbb{E} \bigg[\mathsf{Ber}_G \bigg(  \mathsf{FUSF}_{G,\sigma}\bigg( \sum_{v \in \Lambda_r(\rho) } \1_{ \mathrm{Tri}(v)} \bigg) \bigg) \cdot \min_{v \in \Lambda_r(\rho)} |\Lambda_r(v)|^{-1} \bigg] \nonumber \\
	\label{eq:trifurc-averaged}
	&\geq \mathbb{E} \bigg[\tfrac12\cdot p^{|\Lambda_r(\rho)|} \cdot 2^{-|E(\Lambda_r(\rho))|-2} \cdot \min_{v \in \Lambda_r(\rho)} |\Lambda_r(v)|^{-1} \bigg]  =: c > 0.
\end{align}
Take any finitary invariant bond percolation $\omega$ and
let $T(G,\rho,\omega)$ be the number of trifurcation points of $\tau$ in $K_{\omega}(\rho)$. By the standard 3-partition argument of Burton and Keane (see \cite[Lemma 8.5]{Gri99a}), $T(G,\rho,\omega)$ is bounded above by $|\partial K_\omega(\rho)|$.
Now set
\[
	g(G,\rho,v,\omega) = \mathsf{Ber}_G( \mathsf{FUSF}_{G,\sigma} (\mathrm{Tri}(v) ) ) \cdot \1_{v \in K_{\omega}(\rho)}\cdot |K_{\omega}(\rho)|^{-1}.
\]
Applying~\eqref{eq:MTP} for the function~$g$ and using~\eqref{eq:trifurc-averaged}, we get that
\begin{align*}
	\mathbb{E}\left[|\partial K_{\omega}(\rho)|\cdot |K_{\omega}(\rho)|^{-1} \right] 
	 &\geq \mathbb{E}\left[\mathsf{Ber}_G( \mathsf{FUSF}_{G,\sigma}(T(G,\rho,\omega)))\cdot |K_\omega(\rho)|^{-1} \right] \\
	 &= \mathbb{E}\left [\mathsf{Ber}_G( \mathsf{FUSF}_{G,\sigma}(\mathrm{Tri}(\rho) ) ) \cdot \sum_{v\in  V}  \1_{\rho \in K_{\omega}(v)}\cdot |K_{\omega}(v)|^{-1} \right] \geq c.
\end{align*}
Since~$\omega$ is an arbitrary finitary invariant bond percolation, this contradicts the invariant amenability of~$G$.
\end{proof}

\begin{proof}[Proof of Corollary~\ref{cor:divide-color}]
	Conditional on~$(G,\rho)$ and the random partition~$P$ with finite elements, the law of~$\sigma$ satisfies the assumptions of Theorem~\ref{thm:perco}.
	Thus, it is enough to rule out the case that~$N_\infty(\{\sigma=1\}) = \infty$, almost surely.
	Compared to Corollary~\ref{cor:p-c}, the main difference lies in the use of an additional scale~$R$.
	Once~\eqref{eq:trifurc-exists} is proven, we just need to apply~\eqref{eq:MTP} to~$(G,\rho)$ and~$P$ jointly.
	Precisely, we define~$r\in\N$ such that
	\[
		\mu(\mathrm{Ber}_G(\mathrm{TriComp_r}))\geq \tfrac78.
	\]
	For an integer~$R>r$, define~$\mathrm{Inside}_R$ as the event that all elements of~$P$ intersecting~$\Lambda_r(\rho)$ lie inside~$\Lambda_R(\rho)$.
	Since~$P$ has finite elements, almost surely, we can find~$R$ such that
	\[
		\mu(\mathrm{Inside}_R) \geq \tfrac78.
	\]
	Using the two previous displays and the fact that~$\mathrm{Ber}_G(\mathrm{TriComp_r})\leq 1$, we get
	\[
		\mu(\mathrm{Inside}_R, \mathrm{Ber}_G(\mathrm{TriComp_r})\geq \tfrac14) \geq \tfrac12.
	\]

	We now fix a partition~$P$ that satisfies these events.
	There is an additional subtlety that the events~$\mathrm{TriComp}_r$ and~$\{\sigma \equiv 1 \text{ on } \Lambda_r(\rho)\}$ may not be independent.
	Define~$\overline{\mathrm{TriComp}}_{r,R}$ as the intersection of the events~$\{\sigma \equiv 1 \text{ on } \Lambda_r(\rho)\}$ and the existence of three infinite components of~$\{\sigma=1\}\cap \Lambda_{R-1}^c(\rho)$ connected to~$\Lambda_r(\rho)$.
	Taking~$\sigma\in \mathrm{TriComp}_r$ and opening all elements of~$P$ intersecting~$\Lambda_r(\rho)$ gives us a configuration in~$\overline{\mathrm{TriComp}}_{r,R}$.
	Thus, the following analogue of~\eqref{eq:three-clust-sigma} holds:
	\[
		\mathsf{Ber}_G(\overline{\mathrm{TriComp}}_{r,R}) \geq \tfrac14\cdot p^{|\Lambda_r(\rho)|}.
	\]
	Fix any~$\sigma\in \overline{\mathrm{TriComp}}_{r,R}$.
	As in Corollary~\ref{cor:p-c}, define~$\calC_{n,\rho}$, consider the USTs on~$\calC_{n,\rho}$, and define~$\mathsf{FUSF}_{G,\sigma}$ as their limit.
	Defining $s$-almost trifurcation points as before, we pick $s$ such that the following holds with probability at least~$1-\varepsilon$: every partition element of $P$ intersecting $\Lambda_R(\rho)$ is inside $\Lambda_s(\rho)$ and every $s$-almost trifurcation point in~$\Lambda_R(\rho)$ is a true trifurcation point.  We then fix any realization~$\tau_{\mathrm{out}}$ outside of~$E(\calC_{R,\rho})$, and repeat the earlier argument to show the existence of $\tau_\mathrm{in}\subseteq E(\calC_{R,\rho})$ such that~$\tau_\mathrm{in}\cup \tau_\mathrm{out}$ has a $s$-almost trifurcation point inside~$\Lambda_R(\rho)$.  Therefore,
	\[
		\mathsf{FUSF}_{G,\sigma}(\exists v\in \Lambda_R(\rho) \colon \tau\in \mathrm{Tri}(v)) \geq 2^{-|E(\Lambda_R(\rho)|-2}.
	\]
	
	Now it remains to average over realizations of~$(G,\rho)$ and~$P$.
	Since~$P$ is invariant, the joint law of~$(G,\rho)$ and~$P$ satisfies~\eqref{eq:MTP}.
	Define
	\[
		f(G,P,\rho,v) =  \mathsf{Ber}_G(\mathsf{FUSF}_{G,\sigma}(\mathrm{Tri}(\rho)))\cdot \1_{v \in \Lambda_R(\rho)}\cdot |\Lambda_R(\rho)|^{-1}.
	\]
	Applying~\eqref{eq:MTP} as in the proof of Corollary~\ref{cor:p-c}, we obtain
	\[
		\mathbb{E}[\mathrm{Tri}(\rho)] 
		\geq 
		\mathbb{E} \bigg[\tfrac12\cdot \tfrac14\cdot p^{|\Lambda_r(\rho)|} \cdot 2^{-|E(\Lambda_R(\rho)|-2} \cdot \min_{v \in \Lambda_R(\rho)} |\Lambda_R(v)|^{-1} \bigg]  =: c > 0.
	\]
	We complete the proof as in Corollary~\ref{cor:divide-color}: take any finitary invariant bond percolation~$\omega$ and consider the function $g$ (which now depends on the partition~$P$ as well). Since $\omega$ and $P$ are both invariant and independent of one another, we can use Fubini's theorem to apply~\eqref{eq:MTP} and get a uniform lower bound on the expected value of~$|\partial K_{\omega}(\rho)|/|K_{\omega}(\rho)|$, which contradicts the invariant amenability.
\end{proof}

\section{Loop~\O{n} model: proof of Theorem~\ref{thm:LoopOn}}
\label{section:graph-rep}

Each vertex of $\hexlattice$ belongs to precisely one vertical edge.
This gives a natural bipartition of the vertices of~$\hexlattice$ into those at the \emph{top} and those at the \emph{bottom} of a vertical edge.
We define~$\upvert(\hexlattice)$ as the part that consists of top endpoints:
\[
	\upvert(\hexlattice):=
	\{k+\ell e^{i\pi/3} \colon k,\ell \in \Z\}
	-i/\sqrt3
	\subset
	\vertices{\hexlattice}.
\]
We identify~$\upvert(\hexlattice)$ with the triangular lattice obtained from it by connecting nearest neighbors by an edge.
For a domain~$\domain$ and a loop configuration~$\omega$ on~$\hexlattice$, we write
\[
	\upvert(\domain):=(\upvert(\hexlattice)\cap \vertices\domain)\setminus \vertices{\partial\domain};
	\qquad
	\upvert(\omega):=\{v\in V(\hexlattice) \colon \mathrm{deg}_\omega(v) = 2\}.
\]

The graphical representation below is inspired by the Edwards--Sokal coupling~\cite{EdwSok88} and the work of Chayes and Machta~\cite{ChaMac97}. 
In a particular case of~$n=2$, this representation was introduced by the second author and Spinka; it is described in~\cite{GlaLam25}. 

\begin{definition}[Blocking vertices]\label{def:xi}
	Let~$\omega$ be a loop configuration on~$\hexlattice$.
	A site percolation~$\xi$ on~$\upvert(\hexlattice)$ is defined as follows: 
	\begin{itemize}
		\item each~$v$ of degree~$0$ in~$\omega$ is open in~$\xi$ with probability~$1-x^2$ and otherwise closed, independently of the others;
		\item for each loop~$\ell$ of~$\omega$, all vertices of~$\ell$ are simultaneously open in~$\xi$  with probability~$(n-1)/n$ and otherwise closed, independently of other loops.
	\end{itemize}
We emphasize that, if $\omega$ contains a bi-infinite path, all vertices on that path are closed in $\xi$. Define~$\mu^\omega$ as the law of~$\xi$ given~$\omega$.
	We identify~$\xi$ with the set~$\{\xi=1\}$ that we call {\em blocking vertices}.  
\end{definition}

The next proposition is the key to our proof of Theorem~\ref{thm:LoopOn} and follows from Corollory~\ref{cor:divide-color}.

\begin{proposition}[Blocking vertices do not percolate]
\label{prop:no-perc-xi}
	Let~$n\in [1,2]$ and~$x\in [1/\sqrt{2},1]$.
	Consider any translation-invariant Gibbs measure~$\P$ for the loop~\O{n} model with edge-weight~$x$.
	Sample a loop configuration~$\omega$ from~$\P$ and then~$\xi$ from~$\omega$.
	Then, $\xi$ exhibits no infinite connected component, almost surely.
\end{proposition}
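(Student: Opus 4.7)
The plan is to reduce Proposition~\ref{prop:no-perc-xi} to Corollary~\ref{cor:divide-color} by identifying a divide-and-color model on the triangular lattice that pointwise dominates $\xi$. The vertex set $\upvert(\hexlattice)$ with its nearest-neighbor edges is the triangular lattice: planar, vertex-transitive, and amenable, so with any fixed root it is a unimodular invariantly amenable planar random graph, and the transitive-amenable instance of Corollary~\ref{cor:divide-color} highlighted right after its statement applies to any shift-invariant finitary partition.

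Given $\omega$ sampled from the translation-invariant Gibbs measure $\mathbb{P}$, I would define a random partition $P_\omega$ of $\upvert(\hexlattice)$ by declaring each loop $\ell$ of $\omega$ to contribute the class $\ell\cap\upvert(\hexlattice)$, while every vertex of $\upvert(\hexlattice)$ that is either isolated in $\omega$ or lies on a bi-infinite path of $\omega$ forms its own singleton class. Because loops in a loop configuration are finite cycles by definition, $P_\omega$ is finitary. Translation invariance of $\mathbb{P}$ passes immediately to translation invariance of $P_\omega$ on the triangular lattice, which is the invariance hypothesis required by the corollary.

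The numerical input is that on the region $(n,x)\in[1,2]\times[1/\sqrt{2},1]$, both class-type probabilities appearing in Definition~\ref{def:xi}, namely $(n-1)/n$ and $1-x^2$, lie in $[0,1/2]$. This allows one to couple $\xi$ pointwise beneath a divide-and-color $\sigma$ of parameter $p=1/2$ on $P_\omega$: sample independent uniform variables $U_C\sim\mathrm{Unif}[0,1]$ indexed by the classes of $P_\omega$, and set $\sigma\equiv 1$ on $C$ iff $U_C\leq 1/2$, while $\xi\equiv 1$ on $C$ iff $U_C\leq p_C$, where $p_C\in\{(n-1)/n,\,1-x^2,\,0\}$ is the probability attached to the three class types (loop, isolated vertex, bi-infinite path). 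The marginal law of $\xi$ is then exactly $\mu^\omega$, and $\sigma$ is a divide-and-color with parameter $1/2$ on a finitary invariant partition of the triangular lattice.

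Applying Corollary~\ref{cor:divide-color} to $\sigma$ gives that $\sigma$ has no infinite open connected component almost surely, and since $\xi\leq\sigma$ pointwise the same conclusion transfers to $\xi$. The only real point to verify is the \emph{invariance} of the random partition $P_\omega$ in the unimodular sense demanded by the corollary; in the transitive triangular-lattice setting this reduces to translation invariance of $P_\omega$, which is immediate from that of $\mathbb{P}$, so the hypotheses of the highlighted transitive-amenable case of Corollary~\ref{cor:divide-color} are verified and the proposition follows.
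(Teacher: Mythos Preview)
Your proposal is correct and follows essentially the same approach as the paper: both reduce to Corollary~\ref{cor:divide-color} on the triangular lattice $\upvert(\hexlattice)$ via the loop-induced partition, using that $(n-1)/n\le 1/2$ and $1-x^2\le 1/2$ to dominate $\xi$ by the divide-and-color model at $p=1/2$. You spell out the monotone coupling and the handling of bi-infinite paths more explicitly than the paper does, but the argument is the same.
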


\begin{proof}
	The triangular lattice~$\upvert(\hexlattice)$ is transitive and amenable.
	The loops of~$\omega$ induce its finitary invariant partition: two vertices of~$\upvert(\hexlattice)$ are equivalent if and only if they belong to the same loop of~$\omega$.
	Also, $\xi$ is stochastically dominated by Bernoulli percolation on loops and vertices at parameter~$1/2$, since~$(n-1)/n\leq 1/2$ and~$1-x^2\leq 1/2$.
	Then, by Corollary~\ref{cor:divide-color}, $\xi$ does not percolate, almost surely.
\end{proof}

Define~$\T=(\vertices\T, \edges\T)$ as the triangular lattice dual to~$\hexlattice$: its vertex-set is given by $\{k+\ell
e^{i\pi/3}:k,\ell\in\Z\}\subset\mathbb C$, and the edges connect the nearest neighbors.
We will identify vertices of~$\T$ with faces of~$\hexlattice$.
For each~$\sigma \colon \vertices\T \to \pm 1$, define~$\mathrm{DW}(\sigma)$ as the set of domain walls of~$\sigma$: 
a spanning subgraph of~$\hexlattice$ defined by edges that separate faces of~$\hexlattice$ with opposite spins.
Note that~$\mathrm{DW}(\sigma)$ is a loop configuration on~$\hexlattice$.
Moreover, each loop configuration on~$\hexlattice$ has exactly two preimages under this mapping, and they differ by a global spin flip.
 
For~$\xi\in \{0,1\}^{\upvert(\hexlattice)}$, define~$\Delta(\xi)$ a bond percolation on~$\T$ given by the edges belonging to the (upward oriented triangular) faces of~$\T$ corresponding to the vertices in~$\xi$.
Remarkably, two vertices are connected in~$\xi$ if and only if any two faces containing them are connected in~$\Delta(\xi)$.
In particular, $\xi$ has an infinite connected component if and only if~$\Delta(\xi)$ does.

\begin{proof}[Proof of Theorem~\ref{thm:LoopOn}]

	Sample~$\omega$ with respect to~$\P$, and then a configuration~$\xi$ of blocking vertices with respect to~$\mu^\omega$.
	Given~$\omega$ and~$\xi$, define~$\omega^{\mathrm{free}}$ as the part of~$\omega$ not covered by~$\xi$, and take~$\sigma^{\mathrm{bc}}\in \mathrm{DW}^{-1}(\omega^{\mathrm{free}})$ such that~$\sigma^{\mathrm{bc}}(0)=1$.
	
	By Proposition~\ref{prop:no-perc-xi}, $\xi$ does not percolate~$\mu^\omega$-a.s. for~$\P$-a.e. $\omega$.
	Then, each connected component of~$\Delta(\xi)$ is also finite a.s.
	Now sample a random~$\sigma\in\{\pm 1\}^\T$ by assigning~$1$ or~$-1$ to each component of~$\Delta(\xi)$ independently with probability~$1/2$.
	Finally, define~$\tilde{\sigma} \in\{\pm 1\}^\T$ that coincides with~$\sigma$ on the components of~$\Delta(\xi)$ contained completely inside~$\ballloop_r$ and with~$\sigma^{\mathrm{bc}}$ on all other components of~$\Delta(\xi)$.
	
	Assume that~$\omega$ contains finitely many loops surrounding the origin with positive probability.
	Then, for some~$q\in\N$ and~$\varepsilon>0$, 
	\begin{equation}\label{eq:no-loops-in-omega}
		\P(\omega \text{ contains a loop surrounding } \ballloop_q) < 1-\varepsilon.
	\end{equation}
	By Corollary~\ref{cor:divide-color} applied to~$\sigma$ conditioned on~$\xi$, we get that~$\sigma$ does not contain infinite clusters of neither pluses nor minuses, almost surely for any~$\xi$.
	Averaging over~$\omega$ and~$\xi$, we get that there exists~$r\in \N$ such that
	\[
		\P\left(\mathrm{DW}(\sigma) \text{ contains at least two loops in } \ballloop_r \text{ surrounding } \ballloop_q \right) \geq 1-\varepsilon.
	\]
	Take any such~$\sigma$ and denote the outermost such loop by $\mathrm{loop}_\mathrm{out}$, and set $\mathrm{loop}_\mathrm{in}$ to be another one. 
	Then, for {\em any} consistent choice of $\xi$,  any cluster of~$\Delta(\xi)$ intersecting~$\partial\ballloop_r$ {\em does not} intersect the interior of~$\mathrm{loop}_\mathrm{out}$. In particular, $\mathrm{loop}_\mathrm{in} \in \mathrm{DW}(\tilde{\sigma})$.
	In conclusion,
	\begin{equation}\label{eq:SigmatildeCoupling}
		\P\left(\mathrm{DW}(\tilde{\sigma}) \text{ contains a loop in } \ballloop_r \text{ surrounding } \ballloop_q \right) \geq 1-\varepsilon.
	\end{equation}
	This will contradict~\eqref{eq:no-loops-in-omega} once we have shown that~$\mathrm{DW}(\tilde{\sigma})$ and~$\omega^{\mathrm{free}}\subset\omega$ have the same law.

	From now on, we condition on a realization of~$\omega$ outside of~$\ballloop_r$.
	Note that~$\mathrm{DW}(\tilde{\sigma})$ and~$\omega^{\mathrm{free}}$ deterministically coincide outside of~$\ballloop_r$.
	It will be convenient to consider~$R>r$ such that all finite components of~$\omega\cap \ballloop_r^c$ that intersect~$\ballloop_r$ are contained in~$\ballloop_R$.
	Define~$\omega^{\mathrm{free}}_r$ (resp.~$\omega^{\mathrm{block}}_r$) as spanning subgraphs of~$\ballloop_R$ consisting of the connected components of~$\omega^{\mathrm{free}}\cap \ballloop_R$ (resp.~$(\omega\setminus\omega^{\mathrm{free}})\cap \ballloop_R$) that intersect~$\ballloop_r$.
	By the definitions of~$\xi$ and~$R$, $\omega^{\mathrm{block}}_r$ consists of loops, while~$\omega^{\mathrm{free}}_r$ might {\em a priori} contain also paths starting and ending on~$\partial \ballloop_R$.
	
	Let $\bar{\omega}_r := \omega^{\mathrm{block}}_r \sqcup \omega^{\mathrm{free}}_r$. 
	 Note that~$\bar{\omega}_r$ consists of loops and paths starting and ending at~$\partial\ballloop_R$. Moreover, $\bar{\omega}_r$ has the same restriction to $\ballloop_R \setminus \ballloop_r$ as the union of the components of $\omega$ which intersect $\ballloop_r$.
	Define~$F_{r,R}^{\omega}$ as the set of all spanning subgraphs of~$\ballloop_R$ that satisfy these properties.
	By the DLR condition, the restriction of $\bar{\omega}_r$ to~$\ballloop_r$ is distributed as ~$\muLoop_{\ballloop_r,n,x}^{\omega}$.
	Since each loop is in~$\omega^{\mathrm{block}}_r$ with probability~$(n-1)/n$, independently of the others, we get
	\begin{align*}
		(\omega^{\mathrm{block}}_r,\omega^{\mathrm{free}}_r)
		&\propto n^{\ell(\omega^{\mathrm{block}}_r) + \ell(\omega^{\mathrm{free}}_r)}
		\cdot x^{|\bar{\omega}_r|}
		\cdot 
		\left(\tfrac{n-1}{n}\right)^{\ell(\omega^{\mathrm{block}}_r)}
		\cdot 
		\left(\tfrac{1}{n}\right)^{\ell(\omega^{\mathrm{free}}_r)} \cdot		\1_{ \bar{\omega}_r \in F_{r,R}^{\omega} }
		\\
		&\propto
		(n-1)^{\ell(\omega^{\mathrm{block}}_r)} 
		\left(\tfrac{1}{x^2}\right)^{|\upvert(\ballloop_r)\setminus \; \bar{\omega}_r |}
		\cdot
		\1_{ \bar{\omega}_r \in F_{r,R}^{\omega} },
	\end{align*}
where we use that each loop on~$\hexlattice$ contains twice as many edges as vertices in~$\upvert(\hexlattice)$; this identity holds also for paths in~$\omega^{\mathrm{free}}_r$, up to constant boundary corrections.

	Denote by~$\xi_r$ the restriction of~$\xi$ to the loops that intersect~$\ballloop_r$ and to vertices in~$\upvert(\ballloop_r)$.
	By definition, $\upvert(\omega^{\mathrm{block}}_r)\subseteq \xi_r$ and~$\upvert(\omega^{\mathrm{free}}_r)\cap \xi_r = \emptyset$, and each vertex in~$\upvert(\ballloop_r)\setminus \upvert(\omega^{\mathrm{block}}_r\sqcup\omega^{\mathrm{free}}_r)$ belongs to~$\xi$ with probability~$1-x^2$, independently of the others.
	Thus,
	\begin{align*}
		(\omega^{\mathrm{block}}_r,\omega^{\mathrm{free}}_r,\xi_r)
		\propto
		(n-1)^{\ell(\omega^{\mathrm{block}}_r)} 
		\left(\tfrac{1}{x^2}-1\right)^{|\xi_r\setminus \upvert(\omega^{\mathrm{block}}_r)|}
		\hspace{-1mm}
		\cdot 
		\1_{\upvert(\omega^{\mathrm{block}}_r)\subseteq \xi_r}
		\hspace{-0.5mm}
		\cdot 
		\hspace{-0.5mm}
		\1_{\upvert(\omega^{\mathrm{free}}_r)\cap \xi_r = \emptyset, \, \bar{\omega}_r \in F_{r,R}^{\omega} }.
	\end{align*}
	This distribution depends on~$\omega^{\mathrm{free}}_r$ only via the last indicator.
	Thus, given any pair~$(\omega^{\mathrm{block}}_r,\xi_r)$ of non-zero probability, the conditional distribution of~$\omega^{\mathrm{free}}_r$ is uniform on the set of spanning subgraphs of~$\ballloop_R$ in which vertices of~$\xi$ have degree zero and which satisfy $\bar{\omega}_r \in F_{r,R}^{\omega}$.
	
	Now define~$\mathrm{DW}_r(\tilde{\sigma})$ as the union of the connected components of~$\mathrm{DW}(\tilde{\sigma})$ in~$\ballloop_R$ that intersect~$\ballloop_r$.
	By the definition of~$\tilde{\sigma}$, the distribution of~$\mathrm{DW}_r(\tilde{\sigma})$ is uniform over some set of spanning subgraphs of~$\ballloop_R$.
	Since $\mathrm{DW}(\tilde{\sigma})$ and~$\omega^{\mathrm{free}}$ coincide on~$\ballloop_r^c$, we immediately verify that $\omega^{\mathrm{block}}_r \sqcup \mathrm{DW}_r(\tilde{\sigma}) \in F_{r,R}^{\omega}$.
	Also, $\tilde{\sigma}$ is constant around vertices in~$\xi$, whence loops and paths of~$\mathrm{DW}_r(\tilde{\sigma})$ avoid~$\xi$.
	Finally, the support of~$\omega^{\mathrm{free}}_r$ is inside that of~$\mathrm{DW}_R(\tilde{\sigma})$ because~$\sigma^\mathrm{bc}$ is a possible realization of~$\tilde{\sigma}$ and~$\mathrm{DW}_r(\sigma^\mathrm{bc}) = \omega^{\mathrm{free}}_r$.
	
	This implies that~$\mathrm{DW}(\tilde{\sigma})$ and~$\omega^{\mathrm{free}}\subset\omega$ have the same law, whence~\eqref{eq:no-loops-in-omega} and~\eqref{eq:SigmatildeCoupling} are in contradiction.
	As a consequence, $\omega$ contains infinitely many loops around every face, almost surely.
	When~$n\geq 1, x\leq 1/\sqrt{n}$, this readily implies ~\eqref{eq:RSW} by~\cite[Theorem~$2$]{DumGlaPel21}.
\end{proof}

\bibliographystyle{amsalpha}
\bibliography{biblicomplete.bib}

\end{document}